\newtheorem{lemma}{\bf Lemma}[section]
\newtheorem{thm}[lemma]{\bf Theorem}
\newtheorem{con}[lemma]{\bf Conjecture}
\newtheorem{pro}[lemma]{\bf Problem}
\newtheorem{cla}[lemma]{\bf Claim}
\newcommand{\SL}{{\operatorname{SL}}}
\DeclareMathOperator{\Aut}{Aut}
\title[]{On Thompson Problem}
\author{Rulin Shen}
\address{Rulin Shen. Department of Mathematics, Hubei Minzu University, Enshi, Hubei, P.R.China, 445000.}
\email{shenrulin@hotmail.com}
\author{Wujie Shi}
\address{Wujie Shi. School of Mathematics and Big Date, Chongqing University of Arts and Sciences, Chongqing, P.R.China,  402160;
    School of Mathematical Sciences, Soochow University, Suzhou, Jiangsu, P.R.China, 215006.}
\email{wjshi@suda.edu.cn}
\author{Feng Tang}
\address{Feng Tang. School of Mathematical and Statistics, Changshu Institute of Technology, Changshu, Jiangshu, P.R.China, 215500.}
\email{tangff-04@163.com}
\thanks{\scriptsize Project supported by the NNSF of China (Grant No.10571128 and 10871032) and the foundation of Educational Department of Hubei Province in China (Grant No.
 Q20092905 and Q20111901) . }
\date{}
\keywords{Thompson problem, Same order elements, Disconnected prime graph }
\begin{document}

\setlength{\parskip}{2mm}

\maketitle

\begin{footnotesize}
\quad \qquad This paper is published in Chinese in the journal Scientia Sinica Mathematica, no.40(2010), \\ \indent \quad \qquad pp.533-537 and
no.41(2011), pp.933-938(corrigendum). Translate it to English is helpful for \\ \indent \quad \qquad  the citing some conclusions in this paper.
\end{footnotesize}

%\footnotetext{This paper is published in Journal of Science China, no.6(2010), p.533-537 and no.10(2011),
% p.933-938(corrigendum). (Internal Version) and written in Chinese. Translate it to English
%is helpful for the citing some conclusions in this paper.}

\begin{abstract}
In 1987, the second author of this paper reported his conjecture, all
finite simple groups $S$ can be characterized uniformly using the
order of $S$ and the set of element orders in $S$, to Prof. J. G. Thompson.
In their communications, Thompson posed his problem
about the judgment of solvability of finite groups $G$. In this
paper we give a positive answer for Thompson's problem if the prime
graph of $G$ is not connection.
\end{abstract}

\section{Introduction}

Given a finite group $G$, how to determine the solvability of $G$?
Sometimes we can judge whether a group $G$ is solvable using the famous
Odd Order Theorem, which states that all finite groups of odd order are solvable.
The following conjecture was posed in a letter from the
second author of this article to Professor Thompson in 1987:

\begin{con}
Let $G$ be a finite group and $S$ a finite simple group. Then $G \cong S$
if and only if $|G|=|S|$ and $\pi_e(G)= \pi_e(S)$, where $\pi_e(G)$ denotes the
set of element orders in $G$.
\end{con}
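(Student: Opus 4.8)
The plan is to prove the substantive (``if'') direction: assuming $|G|=|S|$ and $\pi_e(G)=\pi_e(S)$ for a finite simple group $S$, I must deduce $G\cong S$. The reverse implication is immediate, since isomorphic groups share both their order and their set of element orders. A uniform, classification-free argument is out of reach, because the statement is really an assertion about every finite simple group at once; so I would organise the proof around the classification of finite simple groups (CFSG) and reduce, family by family, to arithmetic that is visible from $|S|$ and $\pi_e(S)$ alone.

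The central combinatorial tool is the prime graph (Gruenberg--Kegel graph) $\Gamma(H)$, whose vertices are the primes dividing $|H|$ and whose edges join $p$ and $q$ precisely when $pq\in\pi_e(H)$. The hypotheses give at once that $G$ and $S$ have the same vertex set (equal orders) and the same edge set (equal spectra), so $\Gamma(G)=\Gamma(S)$. For a large proportion of the simple groups this graph is disconnected, and there I would invoke the Gruenberg--Kegel structure theorem: a finite group with disconnected prime graph is Frobenius, $2$-Frobenius, or an extension of a nilpotent normal subgroup by a group $H$ whose socle $\soc(H)$ is non-abelian simple and with $H/\soc(H)\hookrightarrow\Out(\soc(H))$. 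The first step is then to eliminate the two solvable alternatives: a Frobenius or $2$-Frobenius group has a tightly constrained order, and comparing this factorisation against the known factorisation of $|S|$ rules it out whenever $|S|$ has at least three prime divisors, which is the case for every finite simple group.

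Having forced $G$ to have a non-abelian simple composition factor $P=\soc(H)$, the next step is to pin $P$ down. Since $|P|\mid|G|=|S|$ and $\pi_e(P)\subseteq\pi_e(S)$, and since the prime-graph components of $P$ must match those of $S$, in most families this already forces $P\cong S$, and then $|P|=|S|$ collapses the extension so that $G\cong P\cong S$. For the families where identification is not immediate --- most delicately the alternating groups and those Lie-type groups with connected prime graph --- I would bring in finer number theory: Zsigmondy's theorem on primitive prime divisors to recover the defining characteristic, field size and Lie rank of a group of Lie type from $\pi_e(S)$, and the distribution of primes in $(n/2,n]$ to recover the degree $n$ of an alternating group.

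The main obstacle is precisely the connected-prime-graph case. When $\Gamma(S)$ is connected the structure theorem gives no purchase, composition factors cannot be read off cheaply, and the argument must instead extract from $\pi_e(S)$ together with the exact value of $|S|$ enough invariants --- characteristic, rank, field size, or the degree $n$ --- to reconstruct $S$ uniquely, and then check that no other group of that order carries the same spectrum. I expect this to absorb the bulk of the work and to remain irreducibly case-by-case along the CFSG list, with the sporadic groups dispatched by direct inspection of their tabulated orders and spectra.
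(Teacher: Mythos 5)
The first thing to note is that the paper does not prove this statement at all: it appears as a conjecture and is immediately attributed, as an established theorem, to Vasil'ev, Grechkoseeva and Mazurov \cite{VAV}, the endpoint of a long series of papers treating the simple groups family by family. So there is no internal proof to compare yours against; what can be judged is whether your outline would itself constitute a proof, and it would not. Your sketch correctly identifies the broad strategy of that literature (equality of prime graphs, the Gruenberg--Kegel theorem when the graph is disconnected, Zsigmondy primes to recover characteristic, field size and rank, CFSG case analysis), but it defers the entire substance: the identification of the simple composition factor $P$ with $S$, the collapsing of the extension of the nilpotent normal subgroup using $|G|=|S|$, and above all the connected-prime-graph case are all labelled as expected case-by-case work without any case being carried out. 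That residue is not a technicality --- it is where roughly two decades of work by many authors went, and several families (notably the classical groups of large rank, where the prime graph is connected) resisted until \cite{VAV}.

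There is also a concretely false step. You claim the Frobenius and $2$-Frobenius alternatives of the Gruenberg--Kegel theorem are excluded ``whenever $|S|$ has at least three prime divisors.'' This is not an argument: Frobenius groups exist whose order has arbitrarily many prime divisors (take an elementary abelian kernel of order $q^n$ with a cyclic complement of order $q^n-1$ acting as a Singer cycle), and likewise for $2$-Frobenius groups, so no counting of the primes in $|S|$ can eliminate these cases. They must be excluded by a detailed comparison of spectra, cyclic degrees and orders --- indeed, Claims 3.1 and 3.2 of the present paper are devoted to exactly this kind of delicate analysis (the counts $v_{|B|}(G)$ versus $v_{|B|}(F)$, the constraint $\pi(|A|/|N|)\subseteq\pi(|C|)$, and the case-by-case Tables 1--4) in the \emph{easier} setting where the comparison group is itself assumed Frobenius or $2$-Frobenius. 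So even the step you treat as routine requires real work, and the step you acknowledge as hard is left entirely undone.
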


Based on previous work, the conjecture mentioned above has been proven in \cite{VAV},
thus establishing the conjecture as a theorem.
In their communications, Thompson posed the following problem \ref{thompson}
(see \cite{TJG}). Let $G$ be a finite group and let $d \geqslant 1$ be an integer, set $G(d)=
\{x \in G | x^d = 1 \}$. We say that two groups $G_1$ and $G_2$ are of $the~same~order~type$ if and only
if $|G_1(d)|=|G_2(d)|$, $d=1$, $2, \cdots $.

\begin{pro}[Thompson Problem]\label{thompson}
Suppose $G_1$ and $G_2$ are groups of
the same order type. Suppose also that $G_1$ is solvable. Is it true that $G_2$ is also necessarily solvable?
\end{pro}

For groups $G$ of even order, we can not determine the
solvability of $G$ with the order of $G$, but we may judge it
using the same order type of $G$ if the answer of above problem is positive.
In Thompson's private letter he pointed out that:``I have talked with several mathematicians concerning groups of the same order type. The problem arose initially in the study of algebraic number fields, and is of considerable interest".

Let $\pi(n)$ denote the set of prime divisors of a natural number $n$ and write $\pi(G)=\pi(|G|)$.
Using set $\pi(G)$, we can define the prime graph of $G$ (see \cite{WJS}): $\pi(G)$ is the set
of vertices, vertices $r$ and $s$ in $\pi(G)$ are joined by edge if and only if $rs\in \pi_e(G)$.

In this paper we prove the following theorem:

\begin{thm}\label{main}
Let $G$ and $F$ be any two finite groups of the same order type. If $F$
is solvable and its prime graph is disconnected, then $G$ is solvable.
\end{thm}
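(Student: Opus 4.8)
The plan is to extract from the hypothesis the full element-order data, convert it into a statement about the prime graph of $G$, and then run the Gruenberg--Kegel structure theorem, using Frobenius's counting theorem to locate normal subgroups. First I would record what \emph{same order type} really gives. Writing $m_X(e)$ for the number of elements of order $e$ in $X$, we have $|X(d)|=\sum_{e\mid d}m_X(e)$, so by M\"obius inversion $G$ and $F$ have the same order type if and only if $m_G(e)=m_F(e)$ for every $e$. Taking $d$ a common multiple of the exponents yields $|G|=|F|$, hence $\pi(G)=\pi(F)$, and plainly $\pi_e(G)=\pi_e(F)$. Therefore $G$ and $F$ have \emph{identical} prime graphs; in particular the prime graph of $G$ is disconnected, with the same components $\pi_1,\dots,\pi_t$ as $F$. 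Since $F$ is solvable, a classical fact forces $t=2$, and the Gruenberg--Kegel theorem shows $F$ is a Frobenius group or a $2$-Frobenius group.

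Next I would analyse $G$ itself through the Gruenberg--Kegel theorem: a group with disconnected prime graph is (a) Frobenius, (b) $2$-Frobenius, or (c) an extension $1\trianglelefteq N\trianglelefteq M\trianglelefteq G$ with $N$ a nilpotent $\pi_1$-group, $M/N$ nonabelian simple, and $G/M$ a $\pi_1$-group. The core tool is Frobenius's counting theorem: if $n\mid|X|$ then $n\mid|X(n)|$, and if $|X(n)|=n$ then $X(n)\trianglelefteq X$. Suppose first that $F$ is Frobenius and let $\pi_1$ be the set of primes of its kernel $K$; then the $\pi_1$-elements of $F$ together with $1$ form the normal nilpotent Hall subgroup $K$, and $F(|K|)=K$. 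Same order type forces $|G(|K|)|=|K|$, and since $|K|$ is the full $\pi_1$-part of $|G|$, Frobenius's theorem produces a normal Hall $\pi_1$-subgroup $K_1=G(|K|)\trianglelefteq G$ consisting of exactly the $\pi_1$-elements of $G$. Applying the same count prime by prime (the number of $p$-elements of $K_1$ equals that of the normal Sylow $p$-subgroup of $K$, by equal multiplicities) shows $K_1$ has a normal Sylow subgroup for each prime, so $K_1$ is nilpotent. Because $\pi_1$ and $\pi_2$ are distinct components, no nontrivial $\pi_2$-element centralizes a nontrivial element of $K_1$, so a Schur--Zassenhaus complement acts fixed-point-freely and $G=K_1\rtimes H_1$ is itself Frobenius with kernel $K_1$.

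It then remains to prove that the complement $H_1$ is solvable, and this is where I expect the main difficulty. Counting the elements of each $\pi_2$-order and dividing by $|K_1|=|K|$ shows that $H_1$ and the Frobenius complement $H$ of $F$ have the same order type, with $H$ solvable; but $\pi(H_1)=\pi_2$ may itself be a connected graph, so one cannot simply recurse on the theorem. Here I would invoke the Zassenhaus classification of Frobenius complements: a non-solvable Frobenius complement has a normal subgroup of index at most $2$ isomorphic to $\SL(2,5)\times M$ with $M$ of order coprime to $30$ and all Sylow subgroups cyclic. The forced $\SL(2,5)$-section imposes rigid element-order multiplicities (a single involution, prescribed numbers of elements of orders $6$ and $10$, and so on) that cannot be matched by any solvable group with the same order type as $H$, giving a contradiction; thus $H_1$ is solvable and $G=K_1\rtimes H_1$ is solvable.

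The remaining configurations are the genuine obstacles, and they are exactly the places where equal order \emph{type}, not merely an equal prime graph, must do the work. When $F$ is $2$-Frobenius the clean identity $F(|K|)=|K|$ is no longer available (for instance in $S_4$ no proper nontrivial $n$ satisfies $|F(n)|=n$), so the normal series of $G$ must be reconstructed more delicately from the multiplicity data and the normal subgroups $A\trianglelefteq AB\trianglelefteq G$ located piecewise. Hardest of all is excluding case (c) for $G$: a nonabelian simple composition factor $M/N$ would contribute element-order statistics --- controlled via $\textup{CFSG}$ and the counting theorem --- that are incompatible with those of the solvable group $F$ of the same order type, and ruling this out is the crux on which the whole argument turns.
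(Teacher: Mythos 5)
Your reduction and your treatment of the Frobenius case are sound, and in fact take a cleaner route than the paper: the paper establishes that $G$ is Frobenius by running $G$ through the Gruenberg--Kegel trichotomy and killing the $2$-Frobenius and almost-simple alternatives with cyclic-subgroup counts and Sylow arguments, whereas you build the normal nilpotent Hall $\pi_1$-subgroup $K_1=G(|K|)$ directly from Frobenius's counting theorem and the identity $|G(d)|=|F(d)|$, then get fixed-point-freeness from disconnectedness. That works. Your exclusion of a non-solvable complement $H_1$ is only gestured at, but the specific fact you need is exactly the one the paper uses and it is easy to supply: a non-solvable complement contains $Z\times\SL_2(5)$ up to index $2$ with $(|Z|,30)=1$, hence has no element of order $15$ although $15$ divides its order, while the solvable group $H$ of the same order type has a cyclic Hall $\{3,5\}$-subgroup of order $15$, so $15\in\pi_e(H)=\pi_e(H_1)$, a contradiction.

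The genuine gap is the $2$-Frobenius case, which you explicitly leave as ``the crux'' rather than prove, and which is where essentially all of the paper's work lies. When $F=ABC$ is $2$-Frobenius one must exclude the Gruenberg--Kegel alternative in which $G$ has a section $S\leqslant G/N\leqslant\Aut(S)$ with $S$ nonabelian simple, and no soft counting argument does this: the paper first derives the arithmetic constraint $|N|\mid|A|$ and $\pi(|A|/|N|)\subseteq\pi(|C|)$ by comparing the number of elements of order $|B|$ in $F$ (namely $|A|\phi(|B|)$) with the corresponding count in $G$ (Lemma~\ref{divisible}), then identifies $|A|/|N|$ with $|S:N_S(U)|$ for a cyclic Hall subgroup $U$ attached to a non-principal component of $GK(S)$, yielding $\pi\bigl(|S|/(|U||W|)\bigr)\subseteq\pi(\theta_0|W|)$, and finally checks this condition against every family of simple groups with disconnected prime graph --- alternating groups, all Lie-type families via Zsigmondy's theorem on primitive prime divisors, and the sporadic groups via the ATLAS --- disposing of the handful of survivors ($A_2(3)$, ${}^2A_2(3)$, ${}^2A_3(2)$, $L_2(r)$, $L_3(4)$) by ad hoc counts. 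Asserting that the simple section's ``element-order statistics are incompatible'' with those of $F$ is precisely the statement to be proved, and without this CFSG-based case analysis (or a substitute for it) the proof is incomplete. You would also need to rule out $G$ being Frobenius in this case (immediate from your own Frobenius-case argument applied with the roles of the two groups exchanged), a point worth stating explicitly.
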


\section{Preliminaries and Lemmas}

Let $M_G(n)$ be the set of elements of $G$ with order $n$, then the elements of $G$ can be partitioned into some $M_G(n)$, that is,
 $$G=\bigcup_{n\in\pi_e(G)}M_G(n),$$
 where $\pi_e(G)$ is the spectrum of $G$, which is the set of element orders of $G$.
Since every element of order $n$ must be contained in some cyclic subgroup of order $n$, we have
$|M_G(n)|=v_n(G)\phi(n)$, where $v_n(G)$ is the number of cyclic
subgroups of order $n$, called $cyclic~degree~of~order~n$, and
$\phi$ is Euler totient function.
So we can obtain the following
equation, called $order~equation$ of $G$,
$$|G|=\sum_{n\in\pi_e(G)}v_n(G)\phi(n).$$
For convenience, we write order equation of $G$ as $Ord(G)$.  Moreover, if $Ord(G)=Ord(F)$, then $\pi_e(G)=\pi_e(F)$ and $v_d(G)=v_d(F)$ for
any $d\in \pi_e(G)$. It is easy to see that this equality is equivalent to
the same order type of Thompson problem. So we can use same
order equation instead of same order type.  Thompson problem has also quoted
in the book Unsolved Problems in Group Theory (see \cite [Problem 12.37] {MVD}).
It is not hard to prove that if $G$ is nilpotent then
$F$ is nilpotent and if $G$ is supersolvable then $F$ is
solvable (See \cite{XM}). The Shi's conjecture has recently been resolved (See Problem 12.39 of \cite{MVD} or \cite{S}),
and from this conjecture we can see that if $G$ is a finite nonabelian simple group, then $F$ is non-solvable.

Obviously, the set $\pi_e(G)$ forms a partially ordered set with
respect to the divisibility relation, we say $\mu(G)$ is the maximal
element in this partially ordered set. If $n$ is a natural number,
$\pi$ a set of primes, then by $\pi(n)$  the set of all prime
divisors of $n$, by $|n|_r$ we denote the $r$-part of $n$. Now we can
construct a prime graph of $G$ with the set $\pi(G)$ as follows: the
vertices are all elements of $\pi(G)$, two vertices $r, s$ are
joined by an edge if and only if $G$ contains an element of order
$rs$. This graph is called the $Gruenberg$-$Kegel$ $graph$ or
$prime$ $graph$ of $G$ and denoted by $GK(G)$. Denote the connected
components of the graph by $\{\pi_i\mid i=1,\cdots,s:=s(G)\}$ and
if the order of $G$ is even, denote the component containing 2 by
$\pi_1$, $s(G)$ is said to be the number of connected components of $G$.
Set $\mu_i(G)=\{a\in \mu(G)|\pi(a)\subseteq\pi_i(G)\}$.
Williams \cite{WJS} and Kondrat'ev \cite {KA} determined the number
of connected components of finite simple group by using some number
theory techniques. Recently, Buturlakin provided the cyclic
structures of maximal tori of finite classical simple groups (see \cite{BAA}). For finite exceptional simple groups, the cyclic
structures of maximal tori were given in article \cite{K}.
Consequently, it is easy to observe that $\mu_i(G)$ with
$i\geqslant 2$ has a unique element, denoted by $n_i(G)$, which is
the unique element of $\mu_i(G)$. The structure of groups with more
than one connected components of prime graph were given in
\cite{WJS}. It states the following.

\begin{thm}[Gruenberg-Kegel]\label{Gruenberg}
If $G$ is a finite group whose prime graph has more than one components,
%, that is $s(G)\geqslant 2$,
then one of the following holds.
\begin{enumerate}
\item $s(G)=2$ and $G$ is a Frobenius group or $2$-Frobenius group.
% that is, $G=ABC$, where $A$ and $AB$ are normal
%subgroups of $G$, $AB$ and $BC$ are Frobenius group with kernel $A$,
%$B$ and complements $B$, $C$ respectively;

\item There exists a nonabelian simple group $S$ such that
      $S\leqslant \overline{G}=G/N \leqslant \Aut(S)$, where $N$ is a maximal normal soluble
      subgroup of $G$. Furthermore, $N$ and
      $\overline{G}/S$ are $\pi_1(G)$-subgroups, $2\leqslant s(G)\leqslant s(S)$, and for every $i$ where $2\leqslant i\leqslant s(G)$, there is a $j$ with $2\leqslant j\leqslant s(S)$ such that
      $\mu_i(G)=\mu_j(S)$.
\end{enumerate}
\end{thm}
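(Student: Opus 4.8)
The plan is to establish the dichotomy by a reduction along the solvable radical, handling the solvable and non-solvable cases separately. Throughout, write $\pi_1$ for the component of $GK(G)$ containing $2$ (if $|G|$ is odd then $G$ is solvable by the Odd Order Theorem and we fall directly into the solvable branch), and fix a component $\pi_i$ with $i\geqslant 2$. The structural feature I would exploit repeatedly is that $G$ has no element of order $rs$ whenever $r\in\pi_1$ and $s\in\pi_i$; equivalently, a $\pi_1$-element and a $\pi_i$-element can never commute, and more generally can never sit in a common section in a way that yields an element of mixed order. The whole argument is organised so that any ``mixed'' commuting pair one manufactures immediately contradicts disconnectedness, since if $pq\mid|\,\langle g\rangle|$ then a suitable power of $g$ has order exactly $pq$.

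First I would dispose of the solvable case. Suppose $G$ is solvable and let $F=F(G)$ be its Fitting subgroup, so $C_G(F)\leqslant F$. Using that solvable groups possess Hall $\pi$-subgroups for every set of primes $\pi$, take a Hall subgroup for the primes outside $\pi_1$. Disconnectedness forces the action of the relevant complement on a suitable chief factor to be fixed-point-free, since a $\pi_1$-element centralising a nontrivial $\pi_i$-element would produce an element of mixed order. Frobenius' normal complement theorem then exhibits a Frobenius section, and chasing this through a chief series by induction shows that $s(G)=2$ and that $G$ is a Frobenius group (one level of fixed-point-free action) or a $2$-Frobenius group (two nested levels). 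This is alternative (1).

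For the non-solvable case, let $N$ be a maximal normal solvable subgroup and set $\overline{G}=G/N$, so the solvable radical of $\overline{G}$ is trivial and $\soc(\overline{G})=S_1\times\cdots\times S_k$ is a direct product of nonabelian simple groups. I would first show $N$ is a $\pi_1$-group: a prime $p\in\pi_i\cap\pi(N)$ would, via the coprime action of a $\pi_1$-element (an involution is available since $2\in\pi_1$) on the solvable group $N$, yield a commuting pair and hence an element whose order couples $p$ with a $\pi_1$-prime, a contradiction. Since $C_{\overline{G}}(\soc(\overline{G}))=1$, we obtain $\soc(\overline{G})\leqslant\overline{G}\leqslant\Aut(\soc(\overline{G}))$. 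The crucial step is $k=1$: if two factors were present, then for any $p\in\pi(S_i)$ and $q\in\pi(S_j)$ with $i\neq j$, commuting prime-order elements lift to an element of $G$ of order divisible by $pq$, so all primes dividing $|\soc(\overline{G})|$ become mutually adjacent and collapse into the single component $\pi_1$; combined with the $\pi_1$-structure of $N$ this leaves no room for a component $\pi_i$ with $i\geqslant 2$ and contradicts disconnectedness. Hence there is a single simple group $S$ with $S\leqslant\overline{G}\leqslant\Aut(S)$, one checks that $\overline{G}/S\leqslant\Out(S)$ together with $N$ are $\pi_1$-groups, and the components $\pi_i$ ($i\geqslant 2$) of $G$ are forced to be components already present in $S$, giving $2\leqslant s(G)\leqslant s(S)$ and the matching $\mu_i(G)=\mu_j(S)$. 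This is alternative (2).

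The main obstacle is the reduction $k=1$ together with the assertions that $N$ and $\overline{G}/S$ are $\pi_1$-groups. All three rest on the same delicate principle: from any prime sitting in a component $\pi_i$ with $i\geqslant 2$ but lodged in the ``wrong'' place -- inside $N$, inside a second socle factor, or inside the outer part $\overline{G}/S$ -- one must manufacture an honest element of $G$ whose order couples a $\pi_1$-prime with that $\pi_i$-prime. Producing these commuting pairs requires a careful use of coprime action (distinguishing fixed-point-free from non-fixed-point-free behaviour) and Frobenius' theorem, and, most technically, control of the automorphism groups of simple groups: showing that an outer automorphism of $S$ of prime order always fixes a nontrivial element of $S$, so that its prime must lie in $\pi_1$. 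This last point, confining the contribution of $\Out(S)$ to $\pi_1$, is where the bulk of the case analysis concentrates.
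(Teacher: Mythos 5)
First, a point of reference: the paper does not prove this statement at all --- it is the Gruenberg--Kegel theorem, quoted from Williams \cite{WJS} (``The structure of groups with more than one connected components of prime graph were given in \cite{WJS}. It states the following.''). So your attempt must stand on its own, and it does not: its top-level organization --- solvable $G$ yields alternative (1), non-solvable $G$ yields alternative (2) --- is not the dichotomy of the theorem and is in fact false. Alternative (1) is not confined to solvable groups: there exist non-solvable Frobenius groups with disconnected prime graph, and the paper itself depends on this (Lemma \ref{frobenius}(2) describes non-solvable Frobenius complements containing $\SL_2(5)$, and the proof of Claim \ref{cla1} treats non-solvable Frobenius $F$). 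Concretely, embed $\SL_2(5)$ in $\SL_2(11)$ and let it act on $V=\mathbb{F}_{11}^{2}$; any non-identity element of $\SL_2(11)$ fixing a nonzero vector is unipotent of order $11$, so $\SL_2(5)$ acts fixed-point-freely and $G=V\rtimes \SL_2(5)$ is a non-solvable Frobenius group with $\pi_e(G)=\{1,2,3,4,5,6,10,11\}$, hence $s(G)=2$, $\pi_1=\{2,3,5\}$, $\pi_2=\{11\}$. Its maximal normal solvable subgroup is $N=V\rtimes\{\pm I\}$, so $\pi(N)=\{2,11\}\not\subseteq\pi_1$: conclusion (2) fails for this non-solvable group (the clause that $N$ is a $\pi_1(G)$-subgroup is violated, even though $G/N\cong A_5$ happens to be simple), and only conclusion (1) holds. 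Your non-solvable branch, which sets out to establish all of (2) for every non-solvable $G$, therefore cannot be completed.

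The individual step that this example defeats is precisely the one you invoke repeatedly: ``a coprime action of a $\pi_1$-element (an involution) on a solvable group yields a commuting pair.'' That is false: coprime actions, even by involutions, can be fixed-point-free --- inversion on an abelian group of odd order is the basic example, and in the group $G$ above no involution commutes with any element of order $11$. Thompson's theorem tells you that a group admitting a fixed-point-free automorphism of prime order is nilpotent; it does not manufacture fixed points. The fixed-point-free configuration is not a pathology to be excluded by a local argument --- it is exactly the source of alternative (1) --- so a correct proof has to be organized around deciding when the relevant actions are fixed-point-free (this is how \cite{WJS} proceeds: one first shows a Hall $\pi_i$-subgroup, $i\geqslant 2$, is nilpotent and isolated, and the Frobenius/$2$-Frobenius versus almost-simple split emerges from that analysis), rather than around solvability of $G$. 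The ingredients you defer --- the solvable case ``by induction along a chief series,'' the confinement of $\Out(S)$ to $\pi_1$, and the matching $\mu_i(G)=\mu_j(S)$ --- are likewise the real substance of Williams' proof, not routine verifications.
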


From well-known Thompson theorem we deduce that $N$ is nilpotent.
If we restrict some stronger conditions to Theorem \ref{Gruenberg}, we can obtain the following stronger result.

\begin{thm}\label{strongerversion}
Let $G$ and $F$ be two finte groups and $s(F)\geqslant 2$. If $Ord(G)=Ord(F)$, then one of the following occurs.

\begin{enumerate}
\item $s(G)=2$ and $G$ is a Frobenius group or $2$-Frobenius group.

\item There exists a nonabelian simple group $S$ such that
     $S\leqslant \overline{G}=G/N \leqslant \Aut(S)$ where $N$ and $\overline{G}/S$ are
     $\pi_1(G)$-subgroups, the graph $GK(G)$ is disconnected and $s(S)\geqslant
     s(G)$. Moreover, the following statements hold:

     \begin{enumerate}
     \item For every $i$ where $2\leqslant i\leqslant s(G)$, there is a $j$ with $2\leqslant j\leqslant s(S)$ such that $\mu_i(G)=\mu_j(S)$.

     \item Let $m>1$ be a  divisor of any element of $\mu_i(F)$ where
           $i\geqslant 2$. Then $|M_F(m)|=|M_{\overline{G}}(m)||N|$ and
           $|M_{\overline{G}}(m)|=|M_S(m)|$.
      \end{enumerate}
\end{enumerate}
\end{thm}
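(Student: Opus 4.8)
The plan is to reduce to the Gruenberg--Kegel structure theorem (Theorem \ref{Gruenberg}) and then to extract the two numerical identities in part (b) from the $\pi_1(G)$-nature of $N$ and of $\overline{G}/S$. First I would record the consequences of $Ord(G)=Ord(F)$: it forces $\pi_e(G)=\pi_e(F)$ and $v_d(G)=v_d(F)$ for every $d$, hence $|M_G(d)|=v_d(G)\phi(d)=v_d(F)\phi(d)=|M_F(d)|$ for all $d$, and it forces $GK(G)=GK(F)$, so that $s(G)=s(F)\geqslant 2$ and $GK(G)$ is disconnected. Applying Theorem \ref{Gruenberg} to $G$ then yields at once either conclusion (1) (the Frobenius / $2$-Frobenius case) or the existence of a nonabelian simple group $S$ with $S\leqslant\overline{G}=G/N\leqslant\Aut(S)$, with $N$ and $\overline{G}/S$ being $\pi_1(G)$-groups and $2\leqslant s(G)\leqslant s(S)$; in the latter case $s(S)\geqslant s(G)$ and item (a) are immediate, so the whole content lies in (b).

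Fix $m>1$ dividing an element of $\mu_i(F)$ with $i\geqslant 2$. Since $\pi_e(G)=\pi_e(F)$ and $GK(G)=GK(F)$ give $\mu_i(F)=\mu_i(G)$ and $\pi_i(F)=\pi_i(G)$, we have $\pi(m)\subseteq\pi_i(G)$, a component disjoint from $\pi_1(G)$. As $N$ and $\overline{G}/S$ are $\pi_1(G)$-groups, this makes $m$ coprime to both $|N|$ and $|\overline{G}/S|$. The identity $|M_F(m)|=|M_G(m)|$ is already in hand from the first paragraph, so it remains to prove $|M_G(m)|=|N|\,|M_{\overline{G}}(m)|$ and $|M_{\overline{G}}(m)|=|M_S(m)|$. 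The second of these I would handle first: any element of order $m$ in $\overline{G}$ maps to an element of $\overline{G}/S$ of order dividing $\gcd(m,|\overline{G}/S|)=1$, hence lies in $S$; thus $M_{\overline{G}}(m)\subseteq S$ and equality $M_{\overline{G}}(m)=M_S(m)$ follows.

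For the first identity I would study the quotient map $q\colon G\to\overline{G}$ restricted to $M_G(m)$. Because $\gcd(m,|N|)=1$, if $x$ has order $m$ then no nontrivial power of $x$ can lie in $N$, so $q(x)$ again has order exactly $m$; thus $q$ maps $M_G(m)$ into $M_{\overline{G}}(m)$. The key structural point is that $C_N(x)=1$ for every $x\in M_G(m)$: if some $1\neq n\in C_N(x)$ existed, choosing a prime $r\mid m$ (so $r\in\pi_i(G)$) and a prime $s$ dividing the order of $n$ (so $s\in\pi(N)\subseteq\pi_1(G)$), the commuting coprime-order elements $x^{m/r}$ and a suitable power of $n$ would produce an element of order $rs$, joining $r$ and $s$ in $GK(G)$ and contradicting that they lie in different components. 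Hence $\langle x\rangle N$ is a Frobenius group with kernel $N$ and cyclic complement $\langle x\rangle$ of order $m$. Counting within the full preimage $H=q^{-1}(\langle q(x)\rangle)=\langle x\rangle N$, every element of order $m$ is a generator of an $N$-conjugate of $\langle x\rangle$, and the fibre of $q$ over a fixed generator $q(x)^a$ is exactly the $N$-conjugacy class $\{\,n x^a n^{-1}: n\in N\,\}$, which has $|N|$ elements since $C_N(x^a)=C_N(x)=1$. Therefore each fibre of $q|_{M_G(m)}$ over a point of $M_{\overline{G}}(m)$ has size $|N|$, giving $|M_G(m)|=|N|\,|M_{\overline{G}}(m)|$ and completing (b).

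I expect the main obstacle to be this last fibre-counting step: establishing $C_N(x)=1$ from the prime-graph disconnection and then turning the resulting Frobenius structure into the exact factor $|N|$ requires the Schur--Zassenhaus/Frobenius bookkeeping above, whereas the coprimality observations and the appeal to Theorem \ref{Gruenberg} are routine.
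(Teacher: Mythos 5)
Your overall architecture matches the paper's: derive $\pi_e(G)=\pi_e(F)$, $GK(G)=GK(F)$ and $|M_G(d)|=|M_F(d)|$ from $Ord(G)=Ord(F)$, invoke Theorem \ref{Gruenberg}, and reduce everything to item (b) via the coprimality of $m$ with $|N|$ and with $|\overline{G}/S|$. Your proof of $|M_{\overline{G}}(m)|=|M_S(m)|$ (the image of an order-$m$ element in $\overline{G}/S$ has order dividing $\gcd(m,|\overline{G}/S|)=1$, so $M_{\overline{G}}(m)\subseteq S$) is in fact cleaner and more direct than the paper's coset-representative argument. Where you genuinely diverge is the fibre count for $|M_G(m)|=|N|\,|M_{\overline{G}}(m)|$: the paper shows directly that \emph{every} element $y$ with $o(yN)=m$ has $o(y)=m$ (if $o(y)=mq$ with $q>1$, either all primes of $q$ lie in $\pi_i$, forcing $o(yN)=mq\neq m$, or some prime of $q$ lies in another component, creating a forbidden edge in $GK(G)$), so $M_G(m)$ is exactly the full preimage of $M_{\overline{G}}(m)$ and the factor $|N|$ is immediate. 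You instead prove $C_N(x)=1$ from the disconnection and count via conjugacy of complements in $\langle x\rangle N$; this is valid and uses the same disconnection input, just packaged through Frobenius/Schur--Zassenhaus machinery rather than through orders of elements in cosets.

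One point you should patch: your counting establishes that each \emph{nonempty} fibre of $q|_{M_G(m)}$ has size $|N|$, but the identity $|M_G(m)|=|N|\,|M_{\overline{G}}(m)|$ also needs every element of $M_{\overline{G}}(m)$ to actually lie in the image, i.e.\ every order-$m$ element of $\overline{G}$ must lift to an order-$m$ element of $G$. In the paper's version this is automatic (any preimage $y$ already has order $m$); in yours it requires an explicit Schur--Zassenhaus step applied to $q^{-1}(\langle\bar{y}\rangle)$, using $\gcd(m,|N|)=1$ and the solvability of $N$, to produce a cyclic complement of order $m$ mapping onto $\langle\bar{y}\rangle$. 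This is a one-line repair, not a fatal flaw, but as written the surjectivity is asserted rather than proved.
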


\begin{proof}
To prove our theorem, it is enough to prove items $(a)$ and $(b)$ of $(2)$. Let $x$ be an element of $G$ of order $m$.
Since $\pi(N)\subseteq \pi_1(G)=\pi_1(F)$ and $\pi(m)\subseteq
\pi_i(F)=\pi_i(G)$ where $i\geqslant 2$, $(m,|N|)=1$. We assert that $o(x)=o(xN)$.
In fact, let $o(xN)=n$, then $n|m$ since $(xN)^m=x^mN=N$.
Conversely, $x^n\in N$ follows from $N=(xN)^n=x^nN$, this implies $o(x^n)\mid |N|$. Since
$$o(x^n)=\frac{o(x)}{(n,o(x))}=\frac{m}{(n,m)},$$
we have $\frac{m}{(n,m)}\mid |N|$, hence $\frac{m}{(n,m)}=1$, it follows that $m|n$. Therefore, $m=n$.
On the other hand, if $G$ has an element $y$ with $o(y)\neq m$ such that $o(yN)=o(xN)$, then $o(xN)|o(y)$, namely,
$m|o(y)$, we may assume $o(y)=mq$ where $q\neq 1$ and there exists a
prime divisor $r$ of $q$ such that $r\in\pi_j(G)$ with $i\neq j$
(otherwise, $q$ is a divisor of some element of $\mu_i(G)$, we have
$o(yN)=o(y)=mq$ from above arguement, we get a contradiction). This shows that
$\pi_i(G)$ and $\pi_j(G)$ are connected in the prime graph of $G$, a
contradiction. Hence, $o(y)=m$ and $|M_{\overline{G}}(m)|\leqslant
|M_F(m)|=|M_G(m)|$. Moreover, all elements of coset
$xN$ have order $m$, so
$|M_G(m)|=|M_{\overline{G}}(m)||N|$. For the rest of the part, since
$\overline{G}$ is an almost simple group, $\overline{G}$ is an
extension of $S$ by $\overline{G}/S$. We choose a set of coset
representatives $\{y_1, y_2, \cdots, y_l\}$ of $\overline{G}/S$,
where $y_i\in \overline{G}$. Obviously, all elements of
$\overline{G}$ are of the form $y_is$ where $s\in S$ and $1\leqslant i\leqslant
l$. Since
$o(y_iS)=o(y_isS)$, we have $o(y_iS)|o(y_is)$. If $y_iS\neq S$, then by
the above condition that $\overline{G}/S$
 is a $\pi_1(G)$-subgroup, we have $\pi(o(y_iS))\subseteq
 \pi_1(\overline{G})$, so there is a prime divisor of
 $o(y_is)$ which belongs to $\pi_1(G)$. We assume $\pi(o(s))\subseteq
 \pi_i(\overline{G})$ with $i\geq 2$, then any element $x$ of $\overline{G}-S$
 with the property that $o(x)$ is not a divisor of $o(s)$ since
 $\pi_1(\overline{G})$ and $\pi_i(\overline{G})$ are not connected in
 the prime graph of $\overline{G}$. Therefore,
  $|M_{\overline{G}}(m)|=|M_S(m)|$.
\end{proof}

In order to complete the proof of Theorem \ref{main}, we need to introduce some
useful results about Frobenius group and $2$-Frobenius group.

\begin{lemma}[see \cite {MD} and \cite {CG}]\label{frobenius}
Suppose that $F$ is a Frobenius group with
kernel $K$ and complement $H$, then the following statements hold.

\begin{enumerate}
\item $K$ is nilpotent and the Sylow $p$-subgroup of $K$ is cyclic where $p$ is an odd prime number.
\item If $2\in\pi(H)$, then $K$ is abelian. Moreover, if $H$ is
      solvable then the Sylow 2-subgroup of $H$ is either a cyclic group or a
      (generalized) quaternion group $Q_{2^n}$ and if $H$ is non-solvable
      then there exists a subgroup $H_0$ with $|H:H_0|\leqslant 2$ such that
      $H_0\cong Z\times \SL_2(5)$, where $Z$ has properties that all Sylow subgroups of $Z$ are
      cyclic and $(|Z|,30)=1$.
\end{enumerate}
\end{lemma}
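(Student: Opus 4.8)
The plan is to derive the three assertions from the classical structure theory of Frobenius groups, treating each clause as an instance of a known theorem on fixed-point-free action \cite{MD,CG}. Throughout I use that the complement $H$ acts on $K$ by conjugation with $C_K(h)=1$ for every $1\neq h\in H$, and that $(|K|,|H|)=1$, so every action of a $p$-subgroup of $H$ on $K$ is coprime.

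First, for part (1), the nilpotency of $K$ is exactly Thompson's theorem: any finite group admitting a fixed-point-free automorphism of prime order is nilpotent. Choosing an element of prime order in $H$ and letting it act on $K$, I conclude that $K$ is nilpotent; I would regard this as an imported result rather than reprove it. For the cyclicity of the odd Sylow subgroups one uses that a noncyclic elementary abelian group $C_p\times C_p$ cannot act fixed-point-freely and coprimely on a nontrivial group: by the coprime generation-by-centralizers decomposition $K=\langle C_K(A_0),\dots,C_K(A_p)\rangle$ over the $p+1$ order-$p$ subgroups $A_i$, fixed-point-freeness would force each $C_K(A_i)=1$ and hence $K=1$. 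Therefore the relevant Sylow subgroups contain no copy of $C_p\times C_p$, and a $p$-group with a unique subgroup of order $p$ is cyclic when $p$ is odd and cyclic or generalized quaternion when $p=2$; this yields the asserted Sylow structure.

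Next, for part (2), the abelianness of $K$ when $2\in\pi(H)$ is the one step I would prove directly. Since the Sylow $2$-subgroup is cyclic or generalized quaternion, $H$ contains a unique involution $t$, which acts fixed-point-freely on $K$. A fixed-point-free involution inverts every element: the map $k\mapsto k^{-1}\,t(k)$ is injective, hence surjective on the finite group $K$, so every $g\in K$ equals $k^{-1}t(k)$ for some $k$, and then $t(g)=t(k)^{-1}k=(k^{-1}t(k))^{-1}=g^{-1}$. Applying $t$ to a product gives $k_2^{-1}k_1^{-1}=t(k_1k_2)=t(k_1)t(k_2)=k_1^{-1}k_2^{-1}$ for all $k_1,k_2\in K$, so $K$ is abelian. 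The two remaining claims about $H$ I would quote from Zassenhaus's classification of Frobenius complements: when $H$ is solvable the $2$-part is already cyclic or generalized quaternion $Q_{2^n}$ by the Sylow statement above, and when $H$ is non-solvable there is a normal subgroup $H_0$ of index at most $2$ with $H_0\cong Z\times\SL_2(5)$, where $Z$ has all Sylow subgroups cyclic and $(|Z|,30)=1$.

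The main obstacle is that two of these ingredients are genuinely deep: Thompson's nilpotency theorem underlying part (1), and Zassenhaus's determination of the non-solvable Frobenius complements (whose only non-abelian constituent is $\SL_2(5)$) in part (2). Neither admits a short self-contained proof, so the realistic strategy is to establish the elementary inversion argument in full and to cite \cite{MD,CG} for the nilpotency and the classification, which is exactly why the lemma is stated as a quoted result.
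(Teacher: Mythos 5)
The paper gives no proof of this lemma at all --- it is imported verbatim from \cite{MD} and \cite{CG} --- so your decision to cite Thompson's nilpotency theorem and Zassenhaus's classification of non-solvable Frobenius complements while writing out only the elementary pieces is consistent with what the paper does. The two arguments you do write out are correct in themselves: the generation-by-centralizers proof that a group acting coprimely and fixed-point-freely on a nontrivial group cannot contain $C_p\times C_p$, and the proof that the unique involution of $H$ inverts every element of $K$ (injectivity and hence surjectivity of $k\mapsto k^{-1}t(k)$, then $t(g)=g^{-1}$, then abelianness because an inverting automorphism is simultaneously a homomorphism and an anti-homomorphism).

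There is, however, a genuine mismatch between what your argument for part (1) proves and what part (1) literally asserts. Your $C_p\times C_p$ argument applies to subgroups that act fixed-point-freely on $K$, i.e.\ to subgroups of the complement $H$: the centralizers $C_K(A_i)$ only make sense, and only vanish, when the $A_i$ sit inside $H$. It therefore shows that the Sylow $p$-subgroups of $H$ are cyclic for odd $p$ (and cyclic or generalized quaternion for $p=2$). The lemma as printed claims instead that the odd Sylow subgroups of the \emph{kernel} $K$ are cyclic, and that statement is false for general Frobenius groups: $(C_3\times C_3)\rtimes Q_8$ (or $(C_3\times C_3)\rtimes C_4$, with a generator acting as an order-$4$ rotation) is a Frobenius group whose kernel has a non-cyclic Sylow $3$-subgroup. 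So either the printed lemma has a typo ($K$ for $H$ in the second clause of (1), which would turn it into the standard Zassenhaus statement, consistent with clause (2)'s description of the Sylow $2$-subgroup of $H$ and with your proof), or your proof does not establish the claim as stated. You should say explicitly which group your ``relevant Sylow subgroups'' belong to; as written, the argument silently slides from $K$ to $H$ at exactly the point where the statement and the proof part company.
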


\begin{lemma}[see \cite {CG}]\label{2frobenius}
Suppose that $E=ABC$ is a $2$-Frobenius group, where $A$ and $AB$ are normal subgroups of $E$, $AB$ and
$BC$ are Frobenius groups with kernel $A$, $B$ and complement $B$,
$C$, respectively. Then the following statements hold.
\begin{enumerate}
\item $s(E)=t(E)=2$, $\pi_1(E)=\pi(A)\cup\pi(C)$ and $\pi_2(E)=\pi(B)$.
\item $E$ is solvable, $B$ is a Hall cyclic subgroup of $E$ with odd order and $C$ is a cyclic group.
\end{enumerate}
\end{lemma}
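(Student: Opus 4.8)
The plan is to peel off the purely group-theoretic structure statements in (2) first and then feed them into the prime-graph statements in (1). I would begin by exploiting the double role of $B$: it is a Frobenius complement inside $AB$ and a Frobenius kernel inside $BC$. As a kernel, Thompson's theorem forces $B$ to be nilpotent, so $B=\prod_p B_p$ is the direct product of its Sylow subgroups, each characteristic in $B$ and hence invariant under the fixed-point-free action of $C$. As a complement, every Sylow subgroup of $B$ is cyclic or generalized quaternion. Now both a cyclic $2$-group and a generalized quaternion group possess a unique involution, which is characteristic and therefore centralized by every element of $C$; since a nontrivial $C$ acts with trivial centralizer on $B$, this forces $B_2=1$, so $|B|$ is odd and every $B_p$ is cyclic. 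A nilpotent group with cyclic Sylow subgroups is cyclic, so $B$ is cyclic of odd order. Because the Frobenius action of $C$ on $B$ is faithful, $C$ embeds in the abelian group $\Aut(B)$ and is abelian; being also a Frobenius complement it has cyclic Sylow subgroups, whence $C$ is cyclic.

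The remaining assertions of (2) are then immediate: coprimality of kernel and complement in each Frobenius factor gives $(|A|,|B|)=(|B|,|C|)=1$, so $B$ is a Hall subgroup of $E$, and the normal series $1\triangleleft A\triangleleft AB\triangleleft E$ with factors $A$ (nilpotent, a Frobenius kernel), $AB/A\cong B$ (cyclic) and $E/AB\cong C$ (cyclic) shows $E$ is solvable.

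Next I would attack the prime graph (1). Since $A$ is nilpotent and $B,C$ are cyclic, each of $\pi(A),\pi(B),\pi(C)$ is a clique, and together they exhaust $\pi(E)$. The first task is to show that no edge joins $\pi(B)$ to $\pi(A)\cup\pi(C)$. Suppose some element of $E$ had order $qr$ with $q\in\pi(B)$ and $r\in\pi(A)\cup\pi(C)$, and pass to commuting $q$- and $r$-parts. Working in $\overline{E}=E/A\cong BC$, which is Frobenius with kernel $\cong B$ and complement $\cong C$: if the image of the $r$-part is nontrivial it produces, together with the (nontrivial) image of the $q$-part, an element of order $qr$ in the Frobenius group $BC$, which is impossible; if the image dies, then the $r$-part lies in $A$ and commutes with a nontrivial $q$-element which (as $B$ is Hall) lies in some conjugate $B^x$, contradicting the fixed-point-free action of the complement $B^x$ on the kernel $A$ of the conjugate Frobenius group $AB^x$. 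This covers every case, including $r\in\pi(A)\cap\pi(C)$.

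Finally I must show $\pi(A)\cup\pi(C)$ is connected; as both pieces are cliques it suffices to link them. If they share a prime there is nothing to prove, so assume $(|A|,|C|)=1$ and choose $c\in C$ of prime order $r$. The crux is the claim $C_A(c)\neq 1$. Here $c$ acts coprimely on $AB$ with $A$ invariant, so the coprime centralizer formula gives $C_{AB/A}(c)=C_{AB}(c)A/A$; but $AB/A\cong B$ carries the fixed-point-free $C$-action, so $C_{AB/A}(c)=1$ and hence $C_{AB}(c)=C_A(c)$. Were this trivial, $c$ would act fixed-point-freely on $AB$, forcing $AB$ nilpotent by Thompson's theorem — impossible, since a Frobenius group with nontrivial kernel and complement is never nilpotent (otherwise its Hall complement would be normal, the product direct, and the complement would centralize the kernel). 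Thus $C_A(c)\neq 1$, and a nontrivial fixed element of $A$ together with $c$ gives an element whose order is divisible both by a prime of $\pi(A)$ and by $r\in\pi(C)$, the desired edge. Consequently $GK(E)$ has exactly the two components $\pi(A)\cup\pi(C)$ and $\pi(B)$, yielding $s(E)=t(E)=2$ with $\pi_1(E)=\pi(A)\cup\pi(C)$ (it contains $2$ because $|B|$ is odd) and $\pi_2(E)=\pi(B)$. I expect this last connectivity step to be the main obstacle: the two no-edge claims are dictated directly by the Frobenius actions, but joining $\pi(A)$ to $\pi(C)$ in the coprime case is where one must genuinely use the non-nilpotence of $AB$ through Thompson's theorem.
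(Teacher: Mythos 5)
Your proof is essentially correct. Note first that the paper itself offers no proof of this lemma: it is quoted from the reference [CG] (Chen) with a bare citation, so there is no in-paper argument to compare against. Your derivation assembles the statement from the standard toolkit, and every step checks out: the double role of $B$ (Frobenius kernel in $BC$, hence nilpotent by Thompson; Frobenius complement in $AB$, hence with cyclic or generalized quaternion Sylow subgroups) combined with the fixed-point-freeness of $C$ on $B$ correctly kills $B_2$ via the characteristic involution and yields $B$ cyclic of odd order; faithfulness of the $C$-action then puts $C$ inside the abelian group $\Aut(B)$ and forces $C$ cyclic; coprimality of kernel and complement in each Frobenius factor gives the Hall property and the normal series gives solvability. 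For the prime graph, your two non-adjacency arguments (pushing into $E/A\cong BC$ when the $r$-part survives, and invoking the fixed-point-free action of a conjugate $B^x$ on $A$ when it dies) are exactly the right dichotomy, and the connectivity of $\pi(A)\cup\pi(C)$ via the coprime-centralizer formula $C_{AB}(c)=C_A(c)$ plus Thompson's theorem on fixed-point-free automorphisms of prime order is the genuinely nontrivial step, handled correctly.

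Two cosmetic caveats. First, your parenthetical that $\pi_1(E)$ contains $2$ presupposes $|E|$ even; a $2$-Frobenius group can have odd order, in which case the labelling $\pi_1,\pi_2$ is purely conventional, so the identification $\pi_1(E)=\pi(A)\cup\pi(C)$ should be read as a labelling rather than deduced from the parity of $|B|$. Second, you do not address the quantity $t(E)$ appearing in the statement; the paper never defines it either (it is the number of order components in Chen's terminology, which coincides with $s(E)$ here), so nothing is lost, but strictly speaking that clause is not proved.
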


\begin{lemma}\label{divisible}
Let $G$ be a finite group, $E$ a $2$-Frobenius group and $Ord(G)=Ord(E)$. If there
is a nonabelian simple group $S$ such that $S\leqslant \overline{G}=G/N\leqslant \Aut(S)$,
where $N$ is the maximal normal soluble subgroup of $G$, $N$ and $\overline{G}/N$ are
$\pi_1(G)$-groups, then $|N|$ divides $|A|$ and $\pi(\frac{|A|}{|N|})\subseteq \pi(|C|)$.
\end{lemma}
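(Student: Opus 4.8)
The plan is to prove the two assertions separately, extracting everything from the single most accessible piece of arithmetic data: the number of elements of maximal $\pi_2$-order. First I would record that $Ord(G)=Ord(E)$ forces $\pi_e(G)=\pi_e(E)$ and $v_d(G)=v_d(E)$ for all $d$, so $G$ and $E$ have identical prime graphs; in particular $s(G)=s(E)=2$ and, by Lemma \ref{2frobenius}, $\pi_1=\pi(A)\cup\pi(C)$ and $\pi_2=\pi(B)$. Since $B$ is a cyclic Hall $\pi_2$-subgroup and every $\pi_2$-element of the solvable group $E$ lies in a conjugate of $B$, the unique element $n_2(E)$ of $\mu_2(E)$ equals $|B|$, and the cyclic subgroups of order $|B|$ are exactly the Hall $\pi_2$-subgroups. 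A short normaliser computation in the $2$-Frobenius group (all Hall $\pi_2$-subgroups lie in $AB$ and are the $|A|$ complements of the Frobenius kernel $A$) gives $v_{|B|}(E)=|A|$, hence $|M_E(|B|)|=|A|\phi(|B|)$.

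For the first assertion I would apply Theorem \ref{strongerversion}(2)(b) with $F=E$ and $m=|B|=n_2(E)$. This yields $|M_E(|B|)|=|M_S(|B|)|\,|N|=v_{|B|}(S)\phi(|B|)\,|N|$. Comparing with $|M_E(|B|)|=|A|\phi(|B|)$ and cancelling $\phi(|B|)$ gives $|A|=v_{|B|}(S)\,|N|$. Since $v_{|B|}(S)$ is a positive integer, $|N|$ divides $|A|$ and $|A|/|N|=v_{|B|}(S)$.

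The second assertion is the crux, and it is where the obvious approach stalls: $|A|/|N|=v_{|B|}(S)=|S:N_S(\overline P)|$ for a cyclic Hall $\pi_2$-subgroup $\overline P$ of $S$, so controlling its primes looks as if it needs the precise structure of $N_S(\overline P)$ in the simple group $S$, i.e. the classification. I would avoid this entirely. Since $\pi(|A|/|N|)\subseteq\pi(A)\subseteq\pi_1=\pi(A)\cup\pi(C)$, it suffices to show that each prime $p\in\pi(A)\setminus\pi(C)$ satisfies $|A|_p=|N|_p$. Fix such a $p$. As $p\in\pi_1$ we have $p\nmid|B|$, and $p\notin\pi(C)$, so $|E|_p=|A|_p$; because $A$ is nilpotent (Lemma \ref{frobenius}(1)) and normal in $E$, its Sylow $p$-subgroup is characteristic in $A$ and hence a normal Sylow $p$-subgroup of $E$.

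The key idea is that possessing a normal Sylow $p$-subgroup is an invariant of the order type. Indeed, the number of $p$-elements of any finite group is $\sum_{p^k\in\pi_e}v_{p^k}\phi(p^k)$, a quantity determined by $Ord$; by Frobenius' theorem it is a multiple of the $p$-part of the order, with equality precisely when the Sylow $p$-subgroup is unique (normal). In $E$ this count equals $|E|_p$, so by $Ord(G)=Ord(E)$ the corresponding count in $G$ equals $|E|_p=|G|_p$, forcing $G$ to have a normal Sylow $p$-subgroup $P_0$ with $|P_0|=|G|_p=|A|_p$. Being a normal $p$-group, $P_0$ lies in the maximal normal solvable subgroup $N$, so $|N|_p\geq|A|_p$; combined with $|N|\mid|A|$ from the first assertion this gives $|N|_p=|A|_p$. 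Hence no prime of $\pi(A)\setminus\pi(C)$ divides $|A|/|N|$, i.e. $\pi(|A|/|N|)\subseteq\pi(C)$, completing the proof. The only real obstacle is recognising that this ``normal Sylow $\Leftrightarrow$ $p$-element count'' device lets one transport solvable-radical information from $E$ to $G$ without touching the simple group $S$ at all.
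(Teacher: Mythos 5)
Your proof is correct and follows essentially the same route as the paper: the first assertion via Theorem \ref{strongerversion}(2)(b) applied to $m=|B|$ together with $v_{|B|}(E)=|A|$, and the second via counting $p$-elements for $p\in\pi(A)\setminus\pi(C)$ to force a normal Sylow $p$-subgroup of $G$ inside $N$. The only cosmetic differences are that the paper argues the second part by contradiction and leaves the Frobenius-theorem step implicit.
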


\begin{proof}
Let $m>1$ be a divisor of any element of $\mu_i(E)$ where $i\geqslant 2$, from
Theorem \ref{strongerversion}(2)(b) we have $|M_E(m)|=|M_{\overline{G}}(m)|\cdot|N|$. It is easy to see that
$\mu_2(E)=\{|B|\}$ and $|M_E(|B|)|=|A|\phi(|B|)$. Since $Ord(G)=Ord(E)$, we have
$$|M_G(|B|)|=|M_E(|B|)|=|M_{\overline{G}}(|B|)||N|=v_{|B|}(\overline{G})\phi(|B|)|N|.$$
Therefore $|A|=v_{|B|}(\overline{G})\cdot |N|$, this implies that $|N|$ divides $A$.

If there is a prime number $p$ with $p\in \pi(\frac{|A|}{|N|})$ such that
$p\not\in\pi(|C|)$, then the number of $p$-elements of $E$ is
$|A|_p$. Since $Ord(G)=Ord(E)$, the number of $p$-elements
of $G$ is $|G|_p$ and hence the Sylow $p$-subgroup $P$ of $G$ is
normal. Note that $N$ is the maximal normal soluble subgroup of $G$,
so $P\leqslant N$, thus $(p, \frac{|G|}{|N|})=1$. As
$|G|=|A||B||C|$, we can obtain $(p, \frac{|A|}{|N|})=1$, which contradicts our
hypothesis.
\end{proof}

\begin{lemma}\label{equation}
If $m\mid n$ and $\pi(\frac{n}{m})\subseteq
\pi(m)$, then $\pi(m)=\pi(n)$.
\end{lemma}

\begin{lemma}[Zsigmondy Theorem see \cite{Z}]\label{primitiveprime}
Let $q>1$ be a natural number, and let $p$ be a primitive
prime divisor of $q^n-1$, namely $p\mid q^n-1$ but for any $1\leqslant i\leqslant n-1$, $p$
does not divide $q^i-1$. If $n>1$ and $n,q$ do not satisfy the
following two cases, then $q^n-1$ has a primitive prime divisor.

\begin{enumerate}
\item $n=2$ and $q=2^k-1$, where $k$ is a natural number.
\item $n=6, q=2$.
\end{enumerate}
\end{lemma}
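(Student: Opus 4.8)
The plan is to prove this by means of cyclotomic polynomials together with a size estimate of their values; this is the classical argument, and I only indicate the steps. Write $\Phi_d$ for the $d$-th cyclotomic polynomial, so that $q^n-1=\prod_{d\mid n}\Phi_d(q)$. The first observation is that a prime $p$ is a primitive prime divisor of $q^n-1$ exactly when the multiplicative order of $q$ modulo $p$ equals $n$; since $\mathrm{ord}_p(q)$ is the least $d$ with $p\mid q^d-1$, this is equivalent to $p\mid\Phi_n(q)$ together with $p\nmid\Phi_d(q)$ for every proper divisor $d$ of $n$. Thus it suffices to produce a prime $p\mid\Phi_n(q)$ with $\mathrm{ord}_p(q)=n$.

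The heart of the argument is a divisor lemma. Suppose $p\mid\Phi_n(q)$ but $p$ is \emph{not} primitive, so that $d:=\mathrm{ord}_p(q)<n$. I would first note $d\mid n$ (since $p\mid\Phi_n(q)\mid q^n-1$), and then, by comparing the $p$-parts of $q^n-1$ and $q^d-1$, show that $n=d\,p^{k}$ for some $k\geqslant 1$. Because $d=\mathrm{ord}_p(q)$ divides $p-1$ we have $d<p$, so every prime factor of $n$ other than $p$ is at most $d<p$; hence $p$ is forced to be the \emph{largest} prime divisor of $n$, and in particular it is uniquely determined. A refinement of the same $p$-adic valuation computation (essentially lifting-the-exponent) then gives $|\Phi_n(q)|_p=p$, i.e. $p$ divides $\Phi_n(q)$ to exactly the first power, with the single nuisance case $p=2$ set aside. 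Consequently, if $q^n-1$ has \emph{no} primitive prime divisor, then every prime factor of $\Phi_n(q)$ is this one largest prime $p$, so $\Phi_n(q)$ is a power of $p$; the exact-power statement forces $\Phi_n(q)=p$ except in the anomalous case $p=2$, which feeds directly into the Mersenne exception below.

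It then remains to decide when these degenerate possibilities actually occur. When $\Phi_n(q)=p$ I would use the lower bound $\Phi_n(q)=\prod_{\zeta}(q-\zeta)\geqslant(q-1)^{\phi(n)}$, the product running over the primitive $n$-th roots of unity and each factor satisfying $|q-\zeta|\geqslant q-1$ for the real integer $q\geqslant 2$. Combined with $p\leqslant n$ this yields $(q-1)^{\phi(n)}\leqslant n$, an inequality that fails as soon as $n$ or $q$ grows, leaving only finitely many pairs $(n,q)$ to inspect by hand. In the anomalous case $p=2$ one has $n=2^a$ and $q$ odd, and a direct look at $\Phi_{2^a}(q)$ shows its odd part exceeds $1$ for $a\geqslant 2$, so only $n=2$ survives, giving $\Phi_2(q)=q+1=2^{k}$, that is $q=2^{k}-1$. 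Assembling the finite list of survivors isolates exactly the two asserted exceptions: $n=2$ with $q=2^{k}-1$, and $n=6,\ q=2$, where $\Phi_6(2)=3$. In every remaining case $\Phi_n(q)$ carries a prime factor different from the largest prime divisor of $n$, which is then a primitive prime divisor of $q^n-1$.

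I expect the main obstacle to be the exact-power statement $|\Phi_n(q)|_p=p$ in the divisor lemma, especially the exceptional behaviour at $p=2$, where $q$ odd can make $2$ divide $q^2-1$ to a high power; it is precisely this subtlety that produces the Mersenne case $n=2$, $q=2^{k}-1$. The size estimate is conceptually easy, but it must be carried out carefully enough—using a usable lower bound for $\phi(n)$—to keep the residual list of pairs $(n,q)$ genuinely finite and short, so that the two stated exceptions can be confirmed by a bounded, explicit check.
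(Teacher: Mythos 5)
The paper offers no proof of this lemma to compare against: it is Zsigmondy's theorem, quoted as a black box from the 1892 reference \cite{Z}. So your sketch can only be judged on its own merits. The route you take is the standard cyclotomic one, and the ``divisor lemma'' half is essentially sound: for a non-primitive prime $p\mid\Phi_n(q)$ with $d=\mathrm{ord}_p(q)<n$ one gets $n=dp^k$, $d\mid p-1$, hence $p$ is the largest prime factor of $n$, and (for $n\geqslant 3$) $p$ divides $\Phi_n(q)$ exactly to the first power; the $p=2$ anomaly correctly isolates $n=2$, $q+1=2^k$, i.e.\ the Mersenne exception.

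The genuine gap is in the finiteness step. Your inequality $(q-1)^{\phi(n)}\leqslant p\leqslant n$ is vacuous precisely when $q=2$: the left-hand side equals $1$ for every $n$, so it does not ``leave only finitely many pairs $(n,q)$ to inspect'' --- it leaves the entire family $q=2$, $n$ arbitrary, which is exactly where the exception $n=6$, $q=2$ (with $\Phi_6(2)=3$) lives. No improved lower bound for $\phi(n)$, which is what your closing remark proposes, can repair this, since the base of the exponential is $1$. What is needed is a separate, sharper lower bound for $\Phi_n(2)$ itself, e.g.\ from $\Phi_n(q)=\prod_{d\mid n}(q^d-1)^{\mu(n/d)}$ one gets $\Phi_n(q)>q^{\phi(n)}\prod_{j\geqslant 1}(1-q^{-j})$, so that $\Phi_n(2)>c\cdot 2^{\phi(n)}$ with $c\approx 0.288$; only then does $\Phi_n(2)=p\leqslant n$ force $2^{\phi(n)}<4n$ and hence a finite list of candidates containing $n=6$. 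With that estimate supplied, the rest of your outline (the check of the surviving small pairs and the identification of the two stated exceptions) goes through.
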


\section{The proof of our Theorem}

Now we will use Theorem \ref{Gruenberg} and the lemmas introduced above to prove our main theorem. In fact, we prove it with the help of the classification theorem for finite simple groups. First of all, let us consider the case of Frobenius group $F$, which has kernel $K$ and complement $H$.
Recall that if $G$ is a finite group with $Ord(G)=Ord(F)$ then
$|G|=|F|$, $\pi_e(G)=\pi_e(F)$ and $v_n(G)=v_n(F)$ where $n\in \pi_e(F)$.  \medskip

\begin{cla}\label{cla1}
Let $F$ be a frobenius group and $G$ a finite group with $Ord(G)=Ord(F)$, then $G$ is a Frobenius group. Moreover, if $F$ is
solvable, so is $G$.
\end{cla}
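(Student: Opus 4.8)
The plan is to exploit the structure theorems already available. Since $F$ is a Frobenius group, its prime graph is disconnected, so $s(F)\geqslant 2$. Because $Ord(G)=Ord(F)$, Theorem~\ref{strongerversion} applies to $G$ and gives us a dichotomy: either (1) $s(G)=2$ and $G$ is itself a Frobenius or $2$-Frobenius group, or (2) $G$ has a composition factor that is a nonabelian simple group $S$ with $S\leqslant \overline{G}=G/N\leqslant \Aut(S)$. My goal is to show that case (2) cannot occur when $F$ is Frobenius, and that among the surviving possibilities $G$ must in fact be Frobenius (not $2$-Frobenius).

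First I would rule out case (2). The key numerical invariant is the cyclic degree $v_n$: since $Ord(G)=Ord(F)$, we have $v_n(G)=v_n(F)$ for every $n\in\pi_e(F)$. For a Frobenius group $F=KH$, the elements whose order divides $|H|$ and lies entirely in $\pi(H)$ are distributed among the $|K|$ conjugates of the complement, which forces very rigid counts — in particular, for $m$ a divisor of an element of $\mu_i(F)$ with $i\geqslant 2$, one can compute $|M_F(m)|$ explicitly in terms of $|K|$ and $\phi(m)$. Combining Theorem~\ref{strongerversion}(2)(b), which gives $|M_F(m)|=|M_{\overline G}(m)||N|$ and $|M_{\overline G}(m)|=|M_S(m)|$, with the known element-order statistics of the simple group $S$ (obtained from the classification and from Buturlakin's and Kondrat'ev's descriptions of maximal tori), should produce a divisibility or counting contradiction. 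Concretely, I would argue that the number of cyclic subgroups in $S$ of a given order $n_i(S)$ is too large to match the number available in a Frobenius complement, since Frobenius complements have all Sylow subgroups cyclic or quaternion and hence admit a very restricted spectrum.

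Next, assuming we are in case (1), I would distinguish Frobenius from $2$-Frobenius. If $G$ were a $2$-Frobenius group $E=ABC$, then by Lemma~\ref{2frobenius} the middle factor $B$ is a Hall cyclic subgroup of odd order and $\pi_2(E)=\pi(B)$. But in the Frobenius group $F$ the component $\pi_2(F)$ equals $\pi(H)$ (or is governed by the complement), and matching $\mu_2$ together with the cyclic-degree equalities $v_{|B|}(G)=v_{|B|}(F)$ should be incompatible: the count $|M_E(|B|)|=|A|\phi(|B|)$ forces $|A|$ to equal the number of complements in $F$, namely $|K|$, which then over-determines the order and contradicts $|G|=|F|$ unless $C$ is trivial, i.e.\ $G$ is already Frobenius. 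The main obstacle I anticipate is exactly this step: cleanly excluding the $2$-Frobenius possibility and the almost-simple possibility requires a uniform counting argument comparing $v_n$-values, and the almost-simple case in principle ranges over all finite simple groups, so the argument must extract a classification-free contradiction (such as a parity or Sylow-cyclicity obstruction) rather than a case-by-case check.

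Finally, once $G$ is known to be a Frobenius group, I would transfer solvability. If $F$ is solvable, its complement $H$ is solvable, so by Lemma~\ref{frobenius}(2) the Sylow $2$-subgroup of $H$ is cyclic or generalized quaternion, and in particular $H$ contains no section isomorphic to $\SL_2(5)$. Since $G$ is Frobenius with $\pi_e(G)=\pi_e(F)$, its complement has the same spectrum, hence the same Sylow structure, so the non-solvable alternative of Lemma~\ref{frobenius}(2) (which would require an $\SL_2(5)$ section and thus elements whose orders are absent from $\pi_e(F)$) is excluded. Therefore the complement of $G$ is solvable; its kernel is nilpotent by Lemma~\ref{frobenius}(1), and a Frobenius group with solvable complement and nilpotent kernel is solvable, giving $G$ solvable as required.
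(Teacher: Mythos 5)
Your overall strategy matches the paper's (invoke the Gruenberg--Kegel trichotomy via Theorem~\ref{strongerversion}, eliminate the $2$-Frobenius and almost-simple cases, then transfer solvability through Lemma~\ref{frobenius}(2)), but two of the three eliminations are not actually carried out. The most serious gap is the almost-simple case: you say a comparison of $|M_F(m)|$ with $|M_S(m)|$ ``should produce a divisibility or counting contradiction'' and that the argument ``must extract a classification-free contradiction,'' but you never produce one, and it is not clear your proposed route (cyclic degrees of $n_i(S)$ versus the restricted spectrum of a Frobenius complement) terminates, since nothing a priori prevents $S$ from having a component $\mu_j(S)=\mu_i(F)$ with matching counts. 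The paper's actual argument is quite different and concrete: it splits on the location of the prime $2$. If $2\in\pi(H)$ then $K$ is abelian by Lemma~\ref{frobenius}(2), so for $r\in\pi(K)$ the number of $r$-elements of $F$ is exactly $|K|_r=|S|_r$; transferring this count to $S$ forces $S$ to have a normal Sylow $r$-subgroup, contradicting simplicity. If $2\in\pi(K)$ then the normal Sylow $2$-subgroup of $K$ forces $N$ to be a normal Sylow $2$-subgroup of $G$, whence $G/N$ has odd order and is solvable by Feit--Thompson, again a contradiction. Without something of this kind your Case (2) is simply open.

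The $2$-Frobenius exclusion has a smaller but genuine gap: you only treat the possibility $\pi(B)=\pi_2(F)=\pi(H)$, where your count $v_{|B|}(G)=|A|=|K|=v_{|B|}(F)$ does force $|C|=1$. But $\pi_2(F)$ is whichever of $\pi(K)$, $\pi(H)$ misses the prime $2$, so you must also handle $\pi(B)=\pi(K)$; there the paper notes $K$ is cyclic and normal, so $v_{|B|}(F)=1$, while $v_{|B|}(G)\geqslant|A|>1$. Finally, in the solvability transfer the obstruction you name is misdirected: a complement $V\geqslant Z\times\SL_2(5)$ does not exhibit \emph{extra} element orders missing from $\pi_e(F)$; rather it \emph{lacks} the order $15$, which a solvable complement of the same order necessarily has via a cyclic Hall $\{3,5\}$-subgroup (here $|H|_3=3$ and $|H|_5=5$). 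That is the comparison the paper makes, and your version needs this specific invariant to close.
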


\begin{proof}
We will use Gruenberg-Kegel theorem to finish the proof of Claim
\ref{cla1}.

\textbf{Case 1.} $G$ is a $2$-Frobenius group. Let $G=ABC$,
where $A$, $AB$ are normal subgroups of $G$; $AB$, $BC$ are
Frobenius group with kernels $A$, $B$ and complements $B$, $C$
respectively. By Lemmas \ref{frobenius} and \ref{2frobenius}, we have $\pi(B)=\pi_2(F)$, that is,
$\pi(B)=\pi(K)$ or $\pi(H)$. Obviously, $B$ is a cyclic group of odd order by Lemma \ref{2frobenius}.

\begin{enumerate}

\item We first assume that $\pi(B)=\pi(K)$. Since $K$ is a Hall subgroup of $F$ and
$|B|=|K|$, there is an element of $F$ with order $|B|$, and then $K$ is cyclic.
Note that $K$ is normal in $F$, so $v_{|B|}(F)=1$. On the
other hand, $AB$ is a Frobenius group with complement $B$, this implies that
$v_{|B|}(G)\geqslant v_{|B|}(AB)=|A|>1$, hence $v_{|B|}(G)>v_{|B|}(F)$, a
contradiction.

\item We next assume that $\pi(B)=\pi(H)$. Similarly as above, we can obtain that $H$ is a cyclic group, hence
$v_{|B|}(F)=|K|$. Since $A$ and $AB$ are normal subgroups of $G$, $G=ABC=C(AB)=C(BA)=CBA$, therefore any element $g\in G$ can be
written as $cba$, where $a\in A, b\in B$ and $c\in C$, and hence
$B^g=B^{cba}=((B^c)^b)^a=B^a$, this implies that
the number of conjugate subgroups of $B$ in $G$ is equal to the number of $B$ in $A$. Note that $G$ is solvable,
so all subgroups of order $|B|$ are conjugate, and then
$v_{|B|}(G)=v_{|B|}(AB)=|A|$. But $|H||K|=|F|=|G|=|A||B||C|$ and
$|B|=|H|$, thus $|A|=|K|/|C|$. Therefore,
$$v_{|B|}(F)=|K|<|K|/|C|=|A|=v_{|B|}(G),$$  we get a contradiction.

\end{enumerate}

\textbf{Case 2.} There exists a non-abelian simple group $S$ such that $S\leqslant
\overline{G}=G/N \leqslant \Aut(S)$. If $2\in\pi(H)$, then $K$ is abelian.
Let $r\in \pi(K)$, we have $|K|_r=|G|_r=|S|_r$ with the help of Theorem \ref{strongerversion}.
As $K$ is a normal abelian subgroup of $F$, the number of $r$-elements of $K$ equals
$|K|_r=|S|_r$. But the Sylow $r$-subgroup of $S$ is nontrivial,
so the number of $r$-element in $S$ is $|S|_r$, hence $S$ has only one Sylow $r$-subgroup,
that is, $S$ is not simple, it is impossible. Next if $2\in \pi(K)$, then the
Sylow $p$-subgroup of $K$ is normal in $F$ where $p\in \pi(K)$. From $Ord(F)=Ord(G)$
we can see that the Sylow $p$-subgroup of $G$ is also normal in $G$. $N=O_p(G)$, so
$p=2$ and $N$ is the normal Sylow $2$-subgroup of $G$. Thus $|G/N|$ is odd. Using
the well-known Feit-Thompson Theorem, $G/N$ is
soluble, this contradicts our assumption that $G/N$ is non-solvable.

We conclude that $G$ is a Frobenius group by Theorem \ref{strongerversion}. We now assume that $F$
is non-solvable then there exists a subgroup $H_0$ with $|H:H_0|\leqslant 2$ such that $H_0\cong Z\times \SL_2(5)$, all
Sylow subgroups of $Z$ are cyclic and $(|Z|,30)=1$ by Lemma \ref{frobenius}. This shows that
$|H|=2^3\cdot 3\cdot 5\cdot |Z|$ or $2^4\cdot 3\cdot 5\cdot |Z|$. It is not difficult to
see that $H$ has no element of order $15$ since $15\not\in \pi_e(\SL_2(5))$.
Now let $V$ be a complement of $G$, then
$\pi_e(H)=\pi_e(V)$ and $|H|=|V|$  follow from $Ord(F)=Ord(G)$. If $G$ is
solvable, then $V$ has a Hall subgroup of order $15$, this follows that
$15\in\pi_e(V)$, a contradiction. Therefore $G$ is
also non-solvable.
\end{proof}

We now consider $2$-Frobenius group. Suppose that $E$
is a $2$-Frobenius group, that is, $E=ABC$, where $A$, $AB$ are
normal subgroups of $E$; $AB$, $BC$ are Frobenius groups with
kernels $A$, $B$ and complements $B$, $C$ respectively. At the following claim we say $p$ is a prime number.\medskip

\begin{cla}\label{cla2}
Let $E$ be a $2$-Frobenius group and $G$ a finite group with
$Ord(G)=Ord(E)$, then $G$ is a $2$-Frobenius group.
\end{cla}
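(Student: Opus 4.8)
The plan is to feed the trichotomy of Theorem \ref{strongerversion} and then eliminate everything except the $2$-Frobenius option. Since $E$ is a $2$-Frobenius group, Lemma \ref{2frobenius} gives $s(E)=2$, and as $Ord(G)=Ord(E)$ forces $\pi_e(G)=\pi_e(E)$ and $v_d(G)=v_d(E)$, the prime graphs coincide and $s(G)=2$. Thus Theorem \ref{strongerversion} applies and leaves exactly three possibilities: $G$ is a Frobenius group, $G$ is a $2$-Frobenius group, or there is a nonabelian simple group $S$ with $S\leqslant\overline{G}=G/N\leqslant\Aut(S)$. Throughout I would use the two facts recorded in the proof of Lemma \ref{divisible}, namely $\mu_2(E)=\{|B|\}$ and $|M_E(|B|)|=|A|\phi(|B|)$, so that $v_{|B|}(E)=|A|$; and I would note $|A|>1$, since $A=1$ would make $E=BC$ an ordinary Frobenius group.

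First I would rule out that $G$ is a Frobenius group, say with kernel $K$ and complement $H$, whose two prime-graph components $\pi(K),\pi(H)$ must then coincide with $\{\pi_1(E),\pi_2(E)\}=\{\pi(A)\cup\pi(C),\pi(B)\}$. Because $\mu_2(G)=\mu_2(E)=\{|B|\}$, the group $G$ has an element of order $|B|$ and its Hall $\pi_2$-subgroup is cyclic of order $|B|$. If $\pi_2=\pi(H)$, then $H$ is cyclic of order $|B|$, every cyclic subgroup of order $|B|$ is a conjugate of $H$, and self-normalization of Frobenius complements gives $v_{|B|}(G)=|G:H|=|K|=|A||C|$; comparing with $v_{|B|}(E)=|A|$ forces $|C|=1$, a contradiction. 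If instead $\pi_2=\pi(K)$, then by Lemma \ref{frobenius}(1) the kernel $K$ is nilpotent with cyclic odd Sylow subgroups, hence a normal cyclic Hall subgroup of order $|B|$; it is therefore the unique subgroup of that order, so $v_{|B|}(G)=1<|A|=v_{|B|}(E)$, again a contradiction. Hence $G$ is not Frobenius.

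The substantial case, and the one I expect to be the main obstacle, is the almost-simple alternative $S\leqslant\overline{G}=G/N\leqslant\Aut(S)$. Here Lemma \ref{divisible} provides the first reduction: $|N|\mid|A|$ and $\pi(|A|/|N|)\subseteq\pi(|C|)$, so with $a=|A|/|N|$ one gets $|\overline{G}|=a|B||C|$ and $\pi(\overline{G})=\pi(B)\cup\pi(C)$ with $\pi_1(\overline{G})=\pi(C)$; moreover Theorem \ref{strongerversion}(2)(b) with $m=|B|$ yields $v_{|B|}(S)=v_{|B|}(\overline{G})=a$ with $\pi(a)\subseteq\pi(C)$, and since $2\notin\pi(B)$ while $2\in\pi(S)$ by Feit--Thompson, we have $2\in\pi(C)$. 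It is precisely here that the elementary shortcut used for the Frobenius claim fails: one cannot force $\overline{G}$ to have odd order, so Feit--Thompson is unavailable and the classification is genuinely required. I would exploit the containment $\pi_e(S)\subseteq\pi_e(\overline{G})\subseteq\pi_e(G)=\pi_e(E)$ together with $\mu_2(S)=\{|B|\}$ and $|B|$ odd, and then run through the Williams--Kondrat'ev list of finite simple groups with disconnected prime graph, using Zsigmondy's theorem (Lemma \ref{primitiveprime}) to identify the second-component order $n_2(S)=|B|$ and the Frobenius structure of the normalizer $N_S(T)$ of the cyclic Hall $\pi(B)$-subgroup $T$. For each family one checks that $v_{|B|}(S)=a$ with $\pi(a)\subseteq\pi(C)$, combined with the restriction $\pi_e(S)\subseteq\pi_e(E)$, cannot be met, because the first component of a $2$-Frobenius spectrum is too impoverished to accommodate the $\pi_1$-element orders that a simple $S$ is forced to possess. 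This case analysis, rather than the counting, is where essentially all the labor lies. Once both the Frobenius and the almost-simple alternatives are excluded, Theorem \ref{strongerversion} leaves only that $G$ is a $2$-Frobenius group, which is the assertion of the Claim.
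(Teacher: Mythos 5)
Your reduction is sound and follows the paper's skeleton: $s(G)=s(E)=2$, the Gruenberg--Kegel trichotomy, elimination of the Frobenius alternative by comparing $v_{|B|}(G)$ with $v_{|B|}(E)=|A|$ (the paper disposes of this case in one line by citing Claim \ref{cla1}, but your direct count is a correct rerun of the same argument in the other direction), and then Lemma \ref{divisible} in the almost-simple case. The problem is that the almost-simple case is not actually carried out. You write that ``for each family one checks'' that $v_{|B|}(S)=|A|/|N|$ together with $\pi(|A|/|N|)\subseteq\pi(|C|)$ cannot hold; but that verification \emph{is} the proof --- it accounts for the four tables and essentially all of the computation in the paper --- and the single uniform mechanism you offer for it is not adequate.

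Concretely: (i) the identification $v_{|B|}(S)=|S:N_S(U)|$ for a cyclic Hall $\pi_j$-subgroup $U$ needs justification (the paper invokes Lemma 1.1 of \cite{MD}) and is unavailable for $S=L_2(r^f)$ with $\pi(B)=\{r\}$ and for $S=L_3(4)$ with $\pi(B)=\{3\}$, which the paper must treat separately by other means (the count of Sylow $r$-subgroups of $L_2(r)$ and the absence of elements of order $9$ in $L_3(4)$). (ii) The arithmetic condition $\pi(|A|/|N|)\subseteq\pi(|C|)$ does \emph{not} eliminate $S=A_2(3)$: there one finds $|A|/|N|=2^4\cdot 3^2$ and $|C|=6$, so (3.4) is satisfied, and the paper has to fall back on a finer comparison of the numbers of elements of order $13$ in $G$ and in $E$. (iii) The case $S=A_1(2^r)$ is killed by Fermat's little theorem applied to $2^r-1=r^e$, not by Zsigmondy, and $A_1(3^r)$ needs yet another ad hoc argument. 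So your closing assertion that every family fails ``because the first component of a $2$-Frobenius spectrum is too impoverished'' is both unproved and, taken as the sole mechanism, false in at least one case. Until the case-by-case elimination over the Williams--Kondrat'ev list is actually executed, the proposal is a plan rather than a proof.
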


\begin{proof}
Let $E=ABC$ be a $2$-Frobenius group, as assumed. In the light of Claim
3.1 we assume that there is a nonabelian simple group $S$
such that $S\leqslant \overline{G}=G/N \leqslant \Aut(S)$. Since $B$ is a
cyclic Hall subgroup of $E$, all Hall subgroups of $E$ with order $|B|$ are conjugate.
Set $1\neq e=bca\in E$, where $a\in A, b\in B, c\in C$ we have $B^e=B^{bca}=B^a$. Since $AB$
is a Frobenius group with kernel $A$ and complement $B$, so the intersection of any two
Hall subgroups of $E$ of order $|B|$ is trivial and
$v_{|B|}(E)=|A|$. Obviously, $v_{|B|}(G/N)=\frac{|A|}{|N|}$ and $v_{|B|}(S)=\frac{|A|}{|N|}$.
If $S = L_2(r^f)$ where $r$ is an odd prime number and $\pi(B) = \{r\}$,
then $|B| = r^f$. Therefore, there exists an element of order $r^f$ in
$S$. Observe that the Sylow $r$-subgroup of $L_2(r^f)$ is isomorphic to
$Z_r^f$, so $f = 1$. Applying Theorem 8.2(b) and (c) of
Chapter II in \cite{H}, we have the number of Sylow
$r$-subgroups of $L_2(r)$ is $r+1$. It implies that $\frac{|A|}{|N|} =
r+1$. Referring to Table I of \cite{L} we have $G/N = S$, and so
$|A||B||C| = |S||N|$, therefore $|C| = \frac{r(r^2-1)}{2r(r+1)} =
\frac{r-1}{2}$. Using Lemma \ref{divisible}, we have $\pi(r+1) \subseteq
\pi(\frac{r-1}{2})$. From $(\frac{r+1}{2}, \frac{r-1}{2}) = 1$ we deduce that $r
= 3$, i.e., $S = L_2(3)$, this contradicts the fact that $S$ is simple.
If $S = L_3(4)$ and $\pi(B) = \{3\}$, then $S$ does not have an element of order $9$,
 which is a contradiction since $E$ has a cyclic subgroup of order $9$.

When dealing with the remaining cases of simple groups, we will use Lemma 1.1 of
\cite{MD}. If $S$ is not $L_3(4)$ with $\pi(B)=\{3\}$ or $L_2(r^f)$ with $\pi(B)=\{r\}$,
then there exists a cyclic Hall $\pi_j$-subgroup $U$ for each connected component $\pi_j$ of $S$ where $j>1$. Obviously, $|U| = |B|$.
Since $v_d(S) = v_d(E)/|N| =
|A|/|N|$ where $1 < d \in \pi_e(B)$, the intersection of any two distinct Hall
$\pi_j(S)$-subgroups is trivial in our situation, and hence they are conjugate. From the fact that Sylow subgroups are
conjugate we can obtain $v_{|B|}(S) =
|S:N_S(U)|$, that is
$$\frac{|A|}{|N|}=|S:N_S(U)|.\eqno(3.1)$$
Let $\theta = |\overline{G}/S|$. By applying $|G|=|E|$ and (3.1), we have
$$\frac{|A|}{|N|}=\frac{|S|}{|N_S(U)/U| |U|}\eqno(3.2)$$ and
$$|C|=\theta |N_S(U)/U|.\eqno(3.3)$$
In the light of Lemma \ref{divisible}, we can give an additional constraint as
follows:
$$\pi(\frac{|A|}{|N|})\subseteq\pi(|C|).\eqno(3.4)$$\medskip

Assume $S$ is an alternating group $A_n$ where $n\geqslant 5$. If the prime graph of
$S$ has $2$ components, then $n=p$, $p+1$, $p+2$, and $n$, $n-2$
are not both prime numbers. Therefore, $|U| = p$ and $|N_S(U)/U| =
\frac{p-1}{2}$. With the help of the automorphism structure of
alternating groups we have $\theta = 1$, $2$ or $4$. From (3.4) we deduce that $\pi(\frac{n!}{p(p-1)}) \subseteq
\pi(p-1)$. By Lemma \ref{equation}, we have $\pi((p-1)!) = \pi(p-1)$, so
$p=2$ or $3$, contradicting $n\geqslant 5$. If the prime
graph of $S$ has $3$ components, then $n = p$ and $p-2$ is also a
prime number. It follows that $|U| = p$ or $p-2$. We first assume that $|U| = p$, it is impossible
as in the case when the prime graph has $2$ components. We next assume that $|U| =
p-2$, we have $|N_S(U)/U| = p-3$. From equation (3.4), we can obtain
$$\pi(\frac{p!}{2(p-2)(p-3)}) \subseteq \pi(p-3).$$
 Using Lemma \ref{equation} again $\pi(\frac{p!}{2(p-2)}) = \pi(p-3)$, which is impossible.\medskip

 We now consider $S$ is a simple group of Lie type. It is clear that $U$ is a maximal torus of $S$, and thus $N_S(U)/U$ is isomorphic to a subgroup of the Weyl group $W$ of $S$. Since $|W|$ divides $|S|$, combining equations (3.2)-(3.4), we have
$$\pi(\frac{|S|}{|U||W|})\subseteq \pi(\theta_0|W|), \eqno(3.5)$$
where $\theta_0$ is a multiple of $\theta$. Note that $S$ has more than one prime graph components, then by \cite{WJS} and \cite{KA}, we can give the following Tables $1$-$3$, where $|W|$  and  the values $\theta_0$ are given in Section 3.6 of \cite{C} and listed in Tables $1$-$4$ of \cite{L}, respectively.

\begin{table}[h]\tiny %\tiny用于设置表格中字体的大小
\caption{Simple group of Lie type with disconnected prime graph and $s(S)=2$.}
\label{table I}\vspace*{-16pt}

\begin{center}
\begin{tabular}{cccccc}\\\hline
$S$ & Conditions & $|U|$ & $|W|$ & $\theta_0$ & $\frac{|S|}{|U||W|}$\\\hline

$A_{p-1}(q)$ & $(p,q)\neq (3,2),(3,4)$ & $\frac{q^p-1}{(q-1)(p,q-1)}$ & $p!$ & $2p^s$ & $\frac{q^{(^p_2)}\prod^{i=p-1}_{i=1}(q^i-1)}{p!}$\\

$A_{p}(q)$ & $q-1\mid p+1$ & $\frac{q^p-1}{q-1}$ & $(p+1)!$ & $2p^s$ & $\frac{q^{(^{p+1}_2)}(q^{p+1}-1)\prod^{i=p-1}_{i=2}(q^i-1)}{(p+1)!}$\\

$^2A_{p-1}(q)$ &  & $\frac{q^p+1}{(q+1)(p,q+1)}$ & $p!$ & $2^up^s$ & $\frac{q^{(^p_2)}\prod^{i=p-1}_{i=1}(q^i-(-1)^i)}{p!}$ \\

$^2A_{p}(q)$ & $q+1\mid p+1$ & $\frac{q^p+1}{q+1}$ & $(p+1)!$ & $2^up^s$ & $\frac{q^{(^{p+1}_2)}(q^{p+1}-1)\prod^{i=p-1}_{i=2}(q^i-(-1)^i)}{(p+1)!}$ \\

$B_n(q)$ & $n=2^m\ge 4$, $q$ odd & $\frac{q^n+1}{2}$ & $2^n\cdot n!$ & $2^s$ &  $\frac{q^{n^2}(q^{n}-1)\prod^{i=n-1}_{i=1}(q^{2i}-1)}{2^nn!}$ \\

$B_p(3)$ & & $\frac{3^p-1}{2}$ & $2^p\cdot p!$ & $1$ & $\frac{3^{p^2}(3^{p}+1)\prod^{i=n-1}_{i=}(3^{2i}-1)}{2^pp!}$ \\

$C_n(q)$ &$n=2^m\ge 4$ & $\frac{q^n+1}{(2,q-1)}$ & $2^n\cdot n!$ & $1$ & $\frac{q^{n^2}(q^{n}-1)\prod^{i=n-1}_{i=}(q^{2i}-1)}{2^nn!}$ \\

$C_p(q)$ & $q=2,3$ & $\frac{q^p-1}{(2,q-1)}$ & $2^p\cdot p!$ & $1$ & $\frac{q^{p^2}(q^{p}+1)\prod^{i=p-1}_{i=1}(q^{2i}-1)}{2^pp!}$ \\

$D_p(q)$ & $p\ge 5, q=2,3,5$ & $\frac{q^p-1}{(4,q^p-1)}$ & $2^{p-1}\cdot p!$ & $2$ & $\frac{q^{p(p-1)}\prod^{i=p-1}_{i=1}(q^{2i}-1)}{2^{p-1}p!}$ \\

$D_{p+1}(q)$ & $p\ge 3, q=2,3$ & $\frac{q^p-1}{(2,q-1)}$ & $2^{p}\cdot(p+1)!$ & $1$ & $\frac{q^{p(p+1)}(q^p+1)(q^{p+1}-1)\prod^{i=p-1}_{i=1}(q^{2i}-1)}{(2,q-1)\cdot
2^{p-1}(p+1)!}$ \\

$^2D_n(q)$ & $q=2^m\ge 4$ & $q^n+1$ & $2^{n-1}\cdot n!$ & $2^s$ & $\frac{q^{n(n-1)}\prod^{i=n-1}_{i=1}(q^{2i}-1)}{2^{n-1}n!}$ \\

$^2D_n(2)$ & $n=2^m+1\ge 5$ & $2^{n-1}+1$ & $2^{n-1}\cdot n!$ & $1$ & $\frac{2^{n(n-1)}(2^{n-1}-1)(2^n+1)\prod^{i=n-2}_{i=1}(2^{2i}-1)}{2^{n-1}n!}$ \\

$^2D_p(3)$ & $5\le p\ne 2^m+1$ & $\frac{3^p+1}{4}$ & $2^{p-1}\cdot p!$ & $2$ & $\frac{3^{p(p-1)}\prod^{i=n-1}_{i=1}(3^{2i}-1)}{2^{p-2}p!}$ \\

$^2D_n(3)$ & $9\le n=2^m+1\ne p$ & $\frac{3^{n-1}+1}{2}$ & $2^{n-1}\cdot n!$ & $1$ & $\frac{3^{n(n-1)}(3^{n-1}-1)(3^n+1)\prod^{i=n-2}_{i=1}(3^{2i}-1)}{2^{n-1}n!}$ \\

$G_2(q)$ & $2<q\equiv \epsilon (3), \epsilon=\pm 1$ & $q^2-\epsilon q+1$ & $12$ & $2^s3^u$ & $q^6(q^2-1)(q+\epsilon)(q^3-\epsilon)/12$ \\

$^3D_4(q)$ & & $q^4-q^2+1$ & $2^34!$ & $2^s3^u$ & $\frac{q^{12}(q^2-1)(q^4+q^2+1)(q^6-1)}{2^34!}$ \\

$F_4(q)$ & $q$ odd & $q^4-q^2+1$ & $2^73^2$ & $2^s3^u$ & $\frac{q^{24}(q^2-1)(q^6-1)(q^8-1)(q^8+q^6-q^2-1)}{2^73^2}$ \\

$E_6(q)$ &  & $\frac{q^6+q^3+1}{(3,q-1)}$ & $2^73^45$ & $2^s3^u$ & $\frac{q^{36}(q^2-1)(q^3-1)(q^5-1)(q^6-1)(q^8-1)(q^{12}-1)}{2^73^45}$ \\

$^2E_6(q)$ & $q>2$ & $\frac{q^6-q^3+1}{(3,q+1)}$ & $2^73^45$ & $2^s3^u$ & $\frac{q^{36}(q^2-1)(q^3+1)(q^5+1)(q^6-1)(q^8-1)(q^{12}-1)}{2^73^45}$ \\

$^2A_3(2)$ & & $5$ &$4!$ & $2$ & $2^33^3$ \\\hline
\end{tabular}
\end{center}
\end{table}

\begin{table}[h]\tiny
\caption{Simple group of Lie type with disconnected prime graph and $s(S)=3$.
}\label{table II}\vspace*{-16pt}

\begin{center}
\begin{tabular}{cccccc}\\\hline
$S$ & Conditions  & $|U|$ & $|W|$ & $\theta_0$ & $\frac{|S|}{|U||W|}$ \\\hline

$A_1(q)$ & $3<q\equiv \epsilon(\bmod~4)$ & $\frac{q+\epsilon}{2}$ & $2$  & $2^s$ & $\frac{q(q-\epsilon)}{2}$ \\

$A_1(q)$ & $q=2^f>2$ & $q\pm 1$ & $2$ & $1$ & $\frac{q(q\mp 1)}{2}$ \\

$A_1(q)$ & $q=2^r, 3^r$ & $\frac{q-1}{(2,q-1)}$ & $2$ & $r$ & $\frac{q(q-1)}{2}$ \\

$A_2(q)$ & $q>2$, $q$ even & $q\pm 1$ & $6$ & $2^s3^u$ & $\frac{q^3(q\mp 1)(q^3-1)}{6}$ \\

$^2A_5(2)$ & & $7;11$ & $6!$ & $2$ & $2^{11}3^411;2^{11}3^47$ \\

$^2D_{p}(3)$ & $p=2^m+1\ge 5$ & $\frac{3^{p-1}+1}{2}$ & $2^{p-1}p!$ & $2$ & $\frac{3^{p(p-1)}(3^{p-1}-1)(3^p+1)\prod_{i=1}^{i=p-2}(3^{2i}-1)}{2^{p-1}p!}$\\

 & & $\frac{3^{p}+1}{4}$ & & & $\frac{3^{p(p-1)}\prod_{i=1}^{i=p-1}(3^{2i}-1)}{2^{p-2}p!}$ \\

$G_{2}(q)$ & $q\equiv 0(\bmod 3)$ & $q^2\pm q+1$ & $12$ & $2^s3^u$ & $\frac{q^6(q\mp 1)(q^2-1)(q^3+1)}{12}$ \\

$^2G_{2}(q)$ & $q=3^{2m+1}>3$ & $q\pm\sqrt{3q}+1$ & $12$ & $3^s$ & $\frac{q^3(q-1)(q^3+1)}{12(q\pm\sqrt{3q}+1)}$ \\

$F_{4}(q)$ & $q$ even & $q^4+1$ & $2^73^2$ & $2^s$ & $\frac{q^{24}(q^2-1)(q^6-1)(q^4+1)(q^{12}-1)}{2^73^2}$ \\
 & & $q^4-q^2+1$ & & & $\frac{q^{24}(q^2-1)(q^6-1)(q^8-1)(q^8+q^6-q^2-1)}{2^73^2}$ \\

 & & $t_1:=q^2-\sqrt{2q^3}$ & & & $\frac{q^{12}(q-1)(q^3+1)(q^4-1)(q^6+1)}{2^73^2t_1}$ \\

$^2F_{4}(q)$ & $q=2^{2m+1}>3$ & $+q-\sqrt{2q}+1$ & $2^73^2$ & $3^s$ & \\

 & & $t_2:=q^2+\sqrt{2q^3}$ & & & $\frac{q^{12}(q-1)(q^3+1)(q^4-1)(q^6+1)}{2^73^2t_2}$\\

 & & $+q+\sqrt{2q}+1$ & & & \\

$E_7(2)$ & & $73$ & $2^{10}3^45\cdot7$ & $1$ & $2^{53}3^7 5\cdot7\cdot11\cdot13\cdot17\cdot19\cdot31\cdot43\cdot127$ \\

 & & $127$ & & & $2^{53}3^75\cdot7\cdot11\cdot13\cdot17\cdot19\cdot31\cdot43\cdot73$\\

$E_7(3)$ & & $757$ & $2^{10}3^45\cdot7$ & $1$ & $2^{13}3^{59}5\cdot7^211^213^319\cdot37\cdot41\cdot61\cdot73\cdot547\cdot1093$ \\

 & & $1093$ & & & $2^{13}3^{59}5\cdot7^211^213^319\cdot37\cdot41\cdot61\cdot73\cdot547\cdot757$ \\\hline

\end{tabular}
\end{center}
\end{table}

\begin{table}[h]\tiny
\caption{Simple group of Lie type with disconnected prime graph and $s(S)>3$.
}\label{table III}\vspace*{-16pt}

\begin{center}
\begin{tabular}{cccccc}\\\hline
$S$ & Conditions & $|U|$ & $|W|$ & $\theta_0$ & $\frac{|S|}{|U||W|}$ \\\hline

$A_2(4)$ & & $3$;$5$;$7$ & $6$ & $2$ & $2^5\cdot35;2^5\cdot21$;$2^5\cdot15$ \\

$^2B_2(q)$ & $q=2^{2m+1}>2$ & $q-1$ & $8$ & $1$ & $\frac{q^2(q^2+1)}{8}$\\

 & & $q\pm\sqrt{2q}+1$ & & & $\frac{q^2(q-1)(q^2+1)}{8(q\pm\sqrt{2q}+1)}$ \\

 & & $13$ & & & $2^{29}3^55\cdot 7^211\cdot17\cdot19$ \\

$^2E_6(2)$ & & $17$ & $2^73^45$ & $1$ & $2^{29}3^55\cdot 7^211\cdot13\cdot19$ \\

 & & $19$ & & & $2^{29}3^55\cdot 7^211\cdot13\cdot17$ \\

 & & $t_1:=\frac{q^{10}-q^5+1}{q^2-q+1}$ & & & $\frac{q^{120}(q^2-1)(q^8-1)(q^{12}-1)(q^{14}-1)(q^{20}-1)(q^{24}-1)(q^{30}-1)}{2^{14}3^55^27t_1}$ \\

$E_8(q)$ & & $t_2:=\frac{q^{10}+q^5+1}{q^2+q+1}$ & $2^{14}3^55^27$ & $2^s3^u5^v$ & $\frac{q^{120}(q^2-1)(q^8-1)(q^{12}-1)(q^{14}-1)(q^{20}-1)(q^{24}-1)(q^{30}-1)}{2^{14}3^55^27t_2}$\\

 & & $t_3:=q^8-q^4+1$ & & & $\frac{q^{120}(q^2-1)(q^8-1)(q^{12}-1)(q^{14}-1)(q^{20}-1)(q^{24}-1)(q^{30}-1)}{2^{14}3^55^27t_3}$\\

 & $q\equiv 0,1,4(\bmod 5)$ & $t_4:=\frac{q^{10}+1}{q^2+1}$ & & & $\frac{q^{120}(q^2-1)(q^8-1)(q^{12}-1)(q^{14}-1)(q^{20}-1)(q^{24}-1)(q^{30}-1)}{2^{14}3^55^27t_4}$\\  \hline
\end{tabular}
\end{center}
\end{table}

Observing Tables $1$-$3$ and the equation (3.5) we deduce that $\pi(\frac{|S|}{|U||W|})\subseteq
\pi(\theta_0|W|)$, so that if $S$ is not isomorphic to $A_1(q)$ with $\theta _0=r$, which is listed in Table $2$, then $\pi(\theta_0)\subseteq \pi(|W|)$, and hence $\pi(\frac{|S|}{|U||W|})\subseteq \pi(|W|)$.
In the light of Lemma \ref{equation}, we have $$\pi(\frac{|S|}{|U|})=\pi(|W|).\eqno(3.6)$$
Note that $\pi(\frac{|S|}{|U|})$ is the set of the characteristic and some primitive prime divisors of simple group  $S$ of Lie type.
By considering the existence of primitive prime divisors as stated in Lemma \ref{primitiveprime}, we can check equation (3.6) for Tables $1$-$3$ one by one to get that $S$ is isomorphic to $A_2(3)$, ${}^2A_2(3)$ or ${}^2A_3(2)$.
If $S={}^2A_2(3)$ then $\theta_0|N_S(U)/U|=3$, $\frac{|S|}{|N_S(U)/U||U|}=2^5\cdot 3^2$ and if $S={}^2A_3(2)$ then $\theta_0|N_S(U)/U|=8$, $\frac{|S|}{|N_S(U)/U||U|}=2^4\cdot3^4$, these two cases contradict (3.4).
Finally if $S=A_2(3)$, then $\theta_0=2$, $|N_S(U)/U|=3$ and $\frac{|S|}{|N_S(U)/U||U|}=2^4\cdot 3^2$, therefore $G/N\cong \Aut(A_2(3))$ and $|A|=2^4\cdot3^2\cdot |N|$. $|C|=6$ since $|G|=|E|$. It is easy to get that the number of elements of $G$ with order $13$ is $2^6\cdot 3^3\cdot |N|$ by some calculations with the help of Theorem \ref{strongerversion}(2)(b) and \cite{CN}. Observe that $E$ has $\phi(13)|A|=2^5\cdot 3^3\cdot |N|$ elements of order $13$, this implies that $E$ and $G$ can not be the same order type. We now suppose that $S=A_1(q)$ where $q=2^r$ or $3^r$ and $r$ is an odd prime number, then $\frac{|A|}{|N|}=\frac{q(q-1)}{2}$ and $|C|=2r$. If $q=3^r$, then by equation (3.4) we get that $3^r-1=2^e$ where $e$ is an integer, this
 follows that $r=2$, which contradicts the fact that $r$ is an odd prime number. If $q=2^r$, then by equation (3.4) again $2^r-1=r^e$ where $e$ is an integer, contracting Fermat’s little theorem, which shows that $2^r-1\equiv 1(\bmod r)$.
 %which says that if $p$ is a prime number, then for any integer $a$, the number $a^p-a$ is an integer multiple of $p$, that is $a^p-a\equiv 0(\bmod p)$.

Finally we suppose $S$ is a sporadic simple group or ${}^2F_4(2)'$, using \cite{CN} and equations (3.2) and (3.3), we can obtain the following Table $4$.

\begin{table}[h]\scriptsize
\caption{Sporadic simple group $S$ and $^2F_4(2)'$.
}\label{table IV}

\begin{center}
\begin{tabular}{cccc}\\\hline

 $S$ & $|U|$ & $\pi(\frac{|A|}{|N|})$ & $\pi(|C|)$ \\

 $M_{11}$ & $5$;$11$ & $\{2,3,11\}$; $\{2,3\}$ & $\{2\}$;$\{2,5\}$ \\

 $M_{12}$ & $11$ & $\{2,3\}$ & $\{2,5\}$ \\

 $M_{22}$ & $5$;$7$;$11$ & $\{2,3,7,11\}$;$\{2,3,5,11\}$;$\{2,3,7\}$ & $\{2\}$;$\{2,3\}$;$\{2,5\}$ \\

 $M_{23}$ & $11$;$23$ & $\{2,3,7,23\}$;$\{2,3,5,7\}$ & $\{5\}$;$\{11\}$ \\

 $M_{24}$ & $11$;$23$ & $\{2,3,7,23\}$;$\{2,3,5,7\}$ & $\{5\}$;$\{11\}$ \\

 $J_1$ & $7$;$11$;$19$ & $\{2,5,11,19\}$;$\{2,3,7,19\}$;$\{2,5,7,11\}$ & $\{3\}$;$\{5\}$;$\{2,3\}$ \\

 $J_2$ & $7$ & $\{2,3,5\}$ & $\{2,3\}$ \\

 $J_3$ & $17$;$19$ & $\{2,3,5,19\}$;$\{2,3,5,17\}$ & $\{2\}$;$\{2,3\}$ \\

 $J_4$ & $23$;$29$ & $\{2,3,5,7,11,29,31,37,43\}$;$\{2,3,5,7,11,23,31,37,43\}$ & $\{11\};\{2,7\}$ \\

  & $31$;$37$ & $\{2,3,5,7,11,23,29,37,43\}$;$\{2,3,5,7,11,23,29,31,43\}$ & $\{5\}$;$\{2,3\}$ \\

  & $43$ & $\{2,3,5,7,11,23,29,31,37\}$ & $\{2,7\}$ \\

 $HS$ & $7$;$11$& $\{2,3,5,11\}$;$\{2,3,5,7\}$ & $\{2,3\}$;$\{2,5\}$ \\

 $Ru$ & $29$ & $\{2,3,5,13\}$ & $\{2,7\}$ \\

 $Sz$ & $11$;$13$ & $\{2,3,5,7,13\}$;$\{2,3,5,11\}$ & $\{2,5\}$;$\{2\}$ \\

 $He$ & $17$ & $\{2,3,5,7\}$ & $\{2\}$ \\

 $On$ & $11$;$19$;$31$ & $\{2,3,7,19,31\}$;$\{2,3,5,7,11,31\}$;$\{2,3,7,11,19\}$ & $\{2,5\}$;$\{2,3\}$;$\{2,3,5\}$ \\

 $McL$ & $11$ & $\{2,3,5,7\}$ & $\{2,5\}$ \\

 $Ly$ & $31$;$37$;$67$ & $\{2,3,5,7,11,37,67\}$;$\{2,3,5,7,11,31,67\}$;$\{2,3,5,7,11,31,37\}$ & $\{3\}$;$\{2,3\}$;$\{2,11\}$ \\

 $Co_1$ & $23$ & $\{2,3,5,7,13\}$ & $\{11\}$ \\

 $Co_2$ & $11$;$23$ & $\{2,3,5,7,23\}$ ; $\{2,3,5,7,11\}$ & $\{5\}$;$\{11\}$ \\

 $Co_3$ & $23$ & $\{2,3,5,7\}$ & $\{11\}$ \\

 $Fi_{22}$ & $13$ & $\{2,3,5,7,11\}$ & $\{2,3\}$ \\

 $Fi_{23}$ & $17$;$23$ & $\{2,3,5,7,11,13,23\}$;$\{2,3,5,7,13,17\}$ & $\{2\}$;$\{11\}$ \\

 $Fi'_{24}$ & $17$;$23$ & $\{2,3,5,7,11,13,23,29\}$;$\{2,3,5,7,13,17,29\}$ & $\{2\}$;$\{2,11\}$ \\

 & $29$ & $\{2,3,5,7,11,13,17,23\}$ & $\{2,7\}$ \\

 $F_1$ & $41$ & $\{2,3,5,7,11,13,17,19,23,29,31,47,59,71\}$ & $\{2,5\}$ \\

  & $59$ & $\{2,3,5,7,11,13,17,19,23,29,31,41,47,71\}$ & $\{2,7\}$ \\

  & $71$ & $\{2,3,5,7,11,13,17,19,23,29,31,41,47,59\}$ & $\{2,7\}$ \\

 $F_2$ & $31$ ; $47$ & $\{2,3,5,7,11,13,17,19,23\}$;$\{2,3,5,7,11,13,17,19,31\}$ & $\{3,5\}$;$\{2,3\}$ \\

 $F_3$ & $19$;$31$ & $\{2,3,5,7,11,13\}$;$\{2,3,5,7,11,13,19\}$ & $\{3\}$;$\{3,5\}$ \\

 $F_5$ & $19$ & $\{2,3,5,7,11\}$ & $\{2,3\}$ \\

 $^2F_4(2)'$ & $13$ & $\{2,3,5\}$ & $\{2,3\}$ \\\hline
\end{tabular}
\end{center}
\end{table}

\noindent Observing Table $4$, it can be seen that neither all sporadic simple groups nor ${}^2F_4(2)'$ satisfies equation (3.4), i.e., $$\pi(\frac{|A|}{|N|})\subseteq \pi(|C|).$$ Therefore, $G$ is a $2$-Frobenius group.
\end{proof}

\end{document}